\newtheorem{theorem}{Theorem}[section] 
\newtheorem{theoremx}{Theorem}
\newtheorem{lemma}[theorem]{Lemma}
\newtheorem{corollary}[theorem]{Corollary}
\newtheorem{prop}[theorem]{Proposition}
\newtheorem{rem}[theorem]{Remark}
\newtheorem{ex}[theorem]{Example}
\newtheorem{defn}[theorem]{Definition}
\title[Geometric Bordisms of the Accola-Maclachlan, Kulkarni and Wiman Surfaces]{Geometric Bordisms of the Accola-Maclachlan, Kulkarni and Wiman Type II Surfaces}
\author{L. Ferrari}
\address{\newline Institute of Mathematics
\newline Polish Academy of Sciences
\newline  Sniadeckich 8
\newline 00-656 Warsaw, Poland}
\email{leonardocpferrari@gmail.com}
\thanks{L.F. was partially supported by the Polish Center of Science Grant OPUS 2019/35/B/ST1/01120.}
\begin{document}

\begin{abstract} In this paper, we prove that the Accola-Maclachlan surface of genus $g$ bounds geometrically an orientable compact hyperbolic $3$-manifold for every genus. For infinitely many genera, these surfaces are also explicit examples of non-arithmetic surfaces that embed geodesically into a closed non-arithmetic manifold. We also provide explicit geodesic embeddings to the Wiman type II and Kulkarni surfaces of every genus, and prove that these surfaces bound geometrically a compact, orientable manifold for $g\equiv 1 \,( \text{mod} \, 2)$ or $g \equiv 3 \,( \text{mod} \, 8)$, respectively.
\end{abstract}

\maketitle

\section{Introduction}
\label{intro}

Bordism properties of closed manifolds have been a classical and important topic in topology. Whilst classical results concern topological or smooth boundaries, in \cite{LR0} the question of  {\em bounding geometrically} was introduced: namely whether a connected closed orientable hyperbolic $n$--manifold $M$ could arise as the totally geodesic boundary of a compact hyperbolic $(n+1)$--manifold $W$. This problem is particularly complicated in dimension $2$, given that there are uncountably many non-isometric surfaces for every given genus. 
It follows from the Mostow's rigidity theorem,
moreover, that the subset of surfaces of a given genus $g$ that do bound is countable. However, 
so far only a few \textbf{explicit} examples of surfaces that do bound geometrically have been constructed.

In \cite{KRS} another problem was considered: whether a given connected closed orientable hyperbolic $n$--manifold $M$ could {\em embed geodesically}, that is arise as an embedded totally geodesic codimension $1$ submanifold of a hyperbolic $(n+1)$--manifold $W$. The authors, in fact, do show that every arithmetic, compact, hyperbolic manifold of even dimension embeds geodesically; and while these two problems are connected (see Section \ref{embedding}), the methods in \cite{KRS} do not apply to non-arithmetic hyperbolic manifolds or provides explicit examples of the embeddings themselves.\footnote{In \cite{KRiS}, these methods were also extended to some non-arithmetic manifolds, namely those that can be obtained by gluing arithmetic pieces.} 


In this paper, we show an explicit construction of geometric bounding for a famous family of Riemann surfaces, the \textit{Accola-Maclachlan surfaces} $X_g$ of genus $g\ge 2$. They are the surfaces described by the algebraic curve equation

\begin{equation}
y^2=x^{2g+2}-1.
\end{equation}

These surfaces were shown independently by Accola \cite{Accola} and Maclachlan \cite{Maclachlan} to have full conformal automorphism group

\begin{equation}\label{AM group}
\mathrm{Aut}(X_g)=AM_g=\langle a,b \mid a^{2g+2}=b^4=(ab)^2=[a,b^2]=1\rangle \cong (\mathbb{Z}_{2g+2}\times \mathbb{Z}_2) \rtimes \mathbb{Z}_2,
\end{equation}
a group of order $8g+8$. Later, Kulkarni \cite{Kulkarni 1} also showed that, for $g\not\equiv 3 (\text{mod}\, 4)$, this is the only surface with a full automorphism group of such order, except for a finite number of genera;\footnote{From \cite[Proposition 4.2]{Kulkarni 2} and \cite[Sections 4,5]{Kulkarni 1}, one can calculate that the exceptional genera are precisely $g=2,3,4,5,9,11,13,14,17,27,29,39,65,89,125,189$.} but the only such group $G$ with this order and action signature $(0;2,4,2g+2)$, meaning that the quotient $S_g/_G$ is an orbifold of such signature. 

For $g\equiv 3(\text{mod}\, 4)$ Kulkarni also showed there's a second surface, now called \textit{Kulkarni surface $Y_g$} of genus $g$, with a conformal automorphism group $K_g$ of order $8g+8$, with group presentation 
\begin{equation}\label{K group}
\mathrm{Aut}(Y_g)=K_g=\langle a,b \mid a^{2g+2}=b^4=(ab)^2=1,b^2ab^2=a^{g+2}\rangle.
\end{equation}
He also showed that $AM_g$ and $K_g$ are always distinct and the only two such conformal automorphism groups with order $8g+8$ for any genus excluding the exceptional genera, but still the only such groups with signature $(0;2,4,2g+2)$ in any genus $g\equiv 3(\text{mod}\, 4)$. 

From the classification of extendability of Fuchsian signatures by Singerman \cite[Theorems 1, 2]{Singerman}, it follows that these two group are also maximal for any $g\neq 3$; while in $g=3$ it follows from \cite[Case T11]{BCC} that in $AM_3$ is also maximal while $K_3$ is extendable to a group of order $96$, which is unique among the non-hyperelliptic genus $3$ curves \cite{KK}. Since the Kulkarni surfaces are non-hyperelliptic \cite[Equation 2]{Turbek}, we have that the Accola-Maclachlan and the Kulkarni surfaces are entirely determined their conformal automorphism groups containing $AM_g$ or $K_g$, and by these subgroups acting with signature $(0;2,4,2g+2)$. 

Using this fact, we will prove the following for the Accola-Maclachlan surfaces:

\begin{theoremx} \label{main}
The Accola-Maclachlan surface $X_g$ of genus $g$ bounds geometrically a compact, orientable, hyperbolic $3$-manifold $M_g$ for every $g\ge 2$.
\end{theoremx}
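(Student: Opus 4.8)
The plan is to promote the two–dimensional picture to dimension three by a reflection (doubling) construction across a totally geodesic plane, using a family of compact hyperbolic Coxeter prisms whose truncation face is the $(2,4,2g+2)$ triangle, and then to pass to a finite manifold cover whose connected totally geodesic boundary is exactly $X_g$. By the structural fact recalled above, I regard $X_g$ as $\H^2/\Gamma_g$, where $\Gamma_g$ is the kernel of the surjection $\Delta\to AM_g$ from the orientation–preserving (von Dyck) triangle group $\Delta=\Delta(2,4,2g+2)$, so that $\Gamma_g\triangleleft\Delta$ with $\Delta/\Gamma_g\cong AM_g$ acting with signature $(0;2,4,2g+2)$.

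For the geometric core I consider the linear Coxeter diagram on four nodes $D\!-\!A\!-\!B\!-\!C$ with edge labels $3,4,2g+2$ (non-adjacent nodes orthogonal). A direct computation with the tridiagonal Gram matrix shows that for every $g\ge 2$ its signature is $(3,1)$, that the three vertices incident to $D$ are elliptic (the spherical triangles $(2,3,4)$, $(2,2,3)$, $(2,2,2g+2)$), and that the vertex $A\cap B\cap C$ is ultra-ideal, its link being the hyperbolic $(2,4,2g+2)$ triangle. Truncating this vertex by its polar plane $\Pi_0$ produces a compact hyperbolic Coxeter prism $P_g\subset\H^3$ whose truncation face $F_0=\Pi_0\cap P_g$ is orthogonal to $A,B,C$ and is therefore a hyperbolic triangle with angles $\pi/2,\pi/4,\pi/(2g+2)$, i.e. a fundamental triangle for the full reflection group $\Delta^{\ast}(2,4,2g+2)$. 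Let $r_A,r_B,r_C,r_D,r_0$ be the reflections in the facets $A,B,C,D,F_0$, and set $\Gamma=\langle r_A,r_B,r_C,r_D\rangle$, the reflection group of $P_g$ with the truncation face left \emph{unsilvered}. Developing $P_g$ across the four silvered facets tiles the convex region on one side of $\Pi_0$, so, $P_g$ being compact, $\mathcal W_g=\H^3/\Gamma$ is a compact hyperbolic $3$-orbifold whose only boundary is the totally geodesic $2$-orbifold $\Pi_0/\mathrm{Stab}_\Gamma(\Pi_0)$. Since $A,B,C$ are orthogonal to $\Pi_0$ while $D$ is not, $\mathrm{Stab}_\Gamma(\Pi_0)=\langle r_A,r_B,r_C\rangle$ acts on $\Pi_0$ exactly as $\Delta^{\ast}(2,4,2g+2)$; hence $\partial\mathcal W_g$ is the full $(2,4,2g+2)$ triangle orbifold, and the triangle orbifold—together with every cover of it, in particular $X_g$—becomes a candidate totally geodesic boundary.

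It then remains to choose a finite–index subgroup $\Gamma'\le\Gamma$ so that $W_g=\H^3/\Gamma'$ is a compact orientable hyperbolic $3$-manifold with connected boundary isometric to $X_g$. The boundary components correspond to the double cosets $\Gamma'\backslash\Gamma/\Delta^{\ast}$, the one through $\Pi_0$ being $\Pi_0/(\Gamma'\cap\Delta^{\ast})$, so I must arrange simultaneously that $\Gamma'\cap\Delta^{\ast}=\Gamma_g$ (giving the component $\H^2/\Gamma_g=X_g$, which is orientable of genus $g$ because $\Gamma_g\le\Delta$), that $\Gamma=\Gamma'\Delta^{\ast}$ (keeping a single boundary component), and that $\Gamma'$ be torsion-free and orientation-preserving (so $W_g$ is an orientable manifold). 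The condition $\Gamma'\cap\Delta^{\ast}=\Gamma_g$ is exactly a separability statement: cocompact hyperbolic Coxeter groups are virtually special (Haglund–Wise), hence $\Gamma$ is LERF, and separating the finitely many nontrivial cosets of the finite-index subgroup $\Gamma_g$ inside $\Delta^{\ast}$ yields a finite-index $\Gamma'$ with $\Gamma'\cap\Delta^{\ast}=\Gamma_g$; requiring in addition that $\Gamma'$ be torsion-free and orientation-preserving is compatible with this, since $\Gamma_g$ already has both properties and the separation can be performed so as to avoid the torsion classes and the orientation-reversing coset without removing $\Gamma_g$ from the intersection.

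The step I expect to be the real obstacle is the last one: forcing the boundary to be a \emph{single} copy of $X_g$ rather than several covers of the triangle orbifold. This amounts to engineering the finite quotient $\Gamma\to\Gamma/\Gamma'$ so that its restriction to $\Delta^{\ast}$ induces precisely the projection $\Delta^{\ast}\to\Delta^{\ast}/\Gamma_g$ attached to the $AM_g$-action—i.e. extending a prescribed finite quotient from the Fuchsian subgroup to the ambient Kleinian group. I would attack this concretely, assembling $W_g$ out of the $[\Delta^{\ast}:\Gamma_g]=2\,|AM_g|=16g+16$ copies of $P_g$ glued along their silvered facets according to the $\Delta^{\ast}$-tessellation of $X_g$ dictated by the $AM_g$-action, which makes the boundary connected and $AM_g$-symmetric by construction; verifying that the resulting $\Gamma'$ is torsion-free and orientation-preserving then completes the identification. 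The boundary so produced is by construction $\H^2/\Gamma_g=X_g$; equivalently, being a genus-$g$ surface carrying $AM_g$ as isometries with signature $(0;2,4,2g+2)$, it is identified with the Accola–Maclachlan surface by the uniqueness recalled in the introduction. This exhibits $X_g=\partial W_g$ for a compact orientable hyperbolic $3$-manifold $W_g$, proving the theorem.
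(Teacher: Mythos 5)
Your framework---the compact truncated $[3,4,2g+2]$ orthoscheme $P_g$, the orbifold $\mathbb{H}^3/\Gamma$ with totally geodesic boundary the $(2,4,2g+2)$ reflection orbifold, and the reduction to finding a finite-index $\Gamma'\le\Gamma$ with $\Gamma'\cap\Delta^{\ast}=\Gamma_g$, $\Gamma'\Delta^{\ast}=\Gamma$, $\Gamma'$ torsion-free and orientation-preserving---is a legitimate strategy and entirely different from the paper's, which instead builds $X_g$ explicitly as a colouring manifold from four right-angled $(2g+2)$-gons, embeds it geodesically into the colouring manifold of the L\"obell polyhedron $R(2g+2)$, and promotes the embedding to a bounding via an explicit fixed point-free orientation-reversing involution and Lemma \ref{lemma-embed-bound}. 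But your proof has a genuine gap at exactly the step you flag as ``the real obstacle,'' and neither of your two attempts closes it. LERF does give a finite-index $K\le\Gamma$ with $K\cap\Delta^{\ast}=\Gamma_g$ (separating the finitely many nontrivial cosets of $\Gamma_g$ in $\Delta^{\ast}$ suffices, as you say); however, making $K$ torsion-free and orientation-preserving by intersecting with a further finite-index subgroup generically shrinks $K\cap\Delta^{\ast}$ to a \emph{proper} subgroup of $\Gamma_g$, so the boundary component becomes a cover of $X_g$ rather than $X_g$ itself. Your claim that ``the separation can be performed so as to avoid the torsion classes and the orientation-reversing coset without removing $\Gamma_g$ from the intersection'' is precisely the assertion that the finite quotient $\Delta^{\ast}\to\Delta^{\ast}/\Gamma_g$ extends to a finite quotient of $\Gamma$ injective on every finite subgroup of $\Gamma$; that is the entire difficulty, it does not follow from subgroup separability alone, and it is what \cite{KRS} achieve for arithmetic lattices by completely different (quadratic-form) means.

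The ``concrete'' fallback does not repair this. Gluing $[\Delta^{\ast}:\Gamma_g]=16g+16$ copies of $P_g$ ``according to the $\Delta^{\ast}$-tessellation of $X_g$'' only prescribes identifications along the facets $A,B,C$ (those orthogonal to the truncation face); it says nothing about the fourth facet $D$, equivalently about where $r_D$ goes in the putative deck transformation group. Until the $D$-facets of the copies are paired compatibly---and in a way that unwraps the dihedral edge stabilizers of orders $6$ and $8$ along $D\cap A$ and $A\cap B$ and the spherical vertex stabilizers of orders $48$, $12$ and $8g+8$, none of which are seen by the quotient $\Delta^{\ast}\to AM_g$---you have defined neither a manifold nor a subgroup $\Gamma'$, and there is no reason the index $[\Gamma:\Gamma']$ should equal $16g+16$. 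Supplying such an explicit pairing is essentially what the paper's colouring of the L\"obell polyhedron accomplishes in a different tessellation; without it, or without a genuine extension-of-finite-quotients argument, the proof is incomplete.
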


We note that, since the surfaces $X_g$ are commensurable with the triangle groups $(2,4,2g+2)$, they are arithmetic only for $g=2,3,4,5,8$ (with classes II, III, VI, V, XII, respectively, by Takeuchi's classification \cite{Takeuchi}). 
Moreover, from the proof of Corollary \ref{AM embeds}, the manifolds $M_g$ are commensurable with $R(2g+2)$, the right-angled L\"obell polytope with two $(2g+2)$-gons. More precisely, $M_g$ are obtained by the face-pairing of $8$ copies of $R(2g+2)$, and thus have volume\footnote{The actual volume can be calculated numerically for every $g$ by \cite[Theorem 3.19]{Vesnin}.} asymptotically equivalent to $20v_3(g+1)$ as $g\rightarrow\infty$ \cite[Corollary 3.18]{Vesnin}, where $v_3 \sim 1.0149$ is the volume of the ideal regular hyperbolic tetrahedron. Also, by \cite[Lemma 3.8]{Vesnin}, the reflection groups $\Gamma\big(R(2g+2)\big)$ are only arithmetic in the cases $g=2,3$.\footnote{\cite{KRS} also provides lattice embeddings in this  case.} We then also have the following:

\begin{theoremx}
For $g>8$, the surfaces $X_g$ are explicit examples of non-arithmetic surfaces that embed geodesically into compact non-arithmetic orientable hyperbolic $3$-manifolds and bound geometrically compact orientable hyperbolic $3$-manifolds.
\end{theoremx}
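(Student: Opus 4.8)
The plan is to separate the statement into its three assertions---that for $g>8$ the surface $X_g$ is non-arithmetic, that it embeds geodesically into a compact non-arithmetic orientable hyperbolic $3$-manifold, and that it bounds geometrically a compact orientable hyperbolic $3$-manifold---and to verify each by assembling Theorem~\ref{main}, the commensurability data recorded above, and the classifications of Takeuchi and Vesnin. The bounding assertion is precisely Theorem~\ref{main}, and the geodesic embedding of $X_g$ into a compact manifold is furnished by Corollary~\ref{AM embeds}; so the genuinely new content is the two non-arithmeticity claims, one for the surface and one for the ambient $3$-manifolds.

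First I would handle the surface. As noted above, $X_g$ is determined by its automorphism group containing $AM_g$ acting with signature $(0;2,4,2g+2)$, so the uniformizing Fuchsian group of $X_g$ has finite index in the $(2,4,2g+2)$ triangle group and $X_g$ is commensurable with it. Since arithmeticity is a commensurability invariant, and since by Takeuchi's classification \cite{Takeuchi} the $(2,4,2g+2)$ triangle group is arithmetic exactly for the finite list $g=2,3,4,5,8$, the surface $X_g$ is non-arithmetic for all $g>8$. This is also what fixes the threshold: $g=8$ is the largest genus at which $X_g$ is itself arithmetic.

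Next I would establish non-arithmeticity of the $3$-manifolds. The bounding manifold $M_g$ from Theorem~\ref{main} and the compact manifold of Corollary~\ref{AM embeds} into which $X_g$ embeds are both assembled by face-pairing copies of the right-angled L\"obell polytope $R(2g+2)$, hence both are commensurable with the reflection group $\Gamma\big(R(2g+2)\big)$. By \cite[Lemma 3.8]{Vesnin} this reflection group is non-arithmetic for every $g\ne 2,3$, and non-arithmeticity descends along commensurability, so both $3$-manifolds are non-arithmetic for all $g>8$ (in fact for all $g\ge 4$). Compactness and orientability are built into the face-pairing construction, and the embedding is totally geodesic of codimension~$1$; alternatively, one can produce a closed non-arithmetic embedding manifold directly as the double $DM_g=M_g\cup_{X_g}M_g$ across the totally geodesic boundary, which is again a gluing of copies of $R(2g+2)$ and so again commensurable with $\Gamma\big(R(2g+2)\big)$.

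Combining the three ingredients yields the statement for $g>8$, and since all the objects involved are given by explicit polytopal gluings, the examples are explicit in the sense contrasted with \cite{KRS}. I expect the only delicate points to be the two commensurability inputs rather than any deduction from them: one must be sure that $X_g$ really is commensurable with the $(2,4,2g+2)$ triangle group (which rests on the signature $(0;2,4,2g+2)$ of the action of the group containing $AM_g$, established earlier) and that the bounding and embedding manifolds are genuinely commensurable with $\Gamma\big(R(2g+2)\big)$ (which follows from their construction out of $R(2g+2)$). Once these are in hand, the non-arithmeticity of the surface and of the $3$-manifolds is immediate from Takeuchi and Vesnin, respectively, with no further computation.
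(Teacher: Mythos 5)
Your proposal is correct and follows essentially the same route as the paper, which justifies this theorem by exactly the three inputs you identify: Theorem~\ref{main} and Corollary~\ref{AM embeds} for the bounding and embedding, Takeuchi's classification of arithmetic $(2,4,2g+2)$ triangle groups (arithmetic only for $g=2,3,4,5,8$) for the surface, and Vesnin's Lemma~3.8 on the reflection groups $\Gamma\big(R(2g+2)\big)$ (arithmetic only for $g=2,3$) for the ambient $3$-manifolds, all transferred via commensurability invariance of arithmeticity. Your identification of $g=8$ as the threshold coming from the surface rather than the $3$-manifold matches the paper's accounting.
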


We will also prove the following for the Kulkarni surfaces:

\begin{theoremx}
The Kulkarni surface $Y_g$ of genus $g$ embeds geodesically for every $g\equiv 3(\text{mod}\, 4)$, and bounds geometrically an orientable manifold for every odd $g\equiv 3(\text{mod}\, 8)$.
\end{theoremx}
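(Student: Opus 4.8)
The plan is to adapt, with a single modification, the construction behind Theorem~\ref{main}, using the fact established above that $Y_g$ is completely determined by its automorphism group $K_g$ acting with signature $(0;2,4,2g+2)$, and is therefore commensurable with the $(2,4,2g+2)$ triangle group. That triangle group occurs inside the right-angled reflection group $\Gamma\big(R(2g+2)\big)$ of the L\"obell polytope as the stabilizer of a totally geodesic plane $P\subset\H^3$, namely a plane of mirror symmetry of $R(2g+2)$ through the axis joining the centers of the two $(2g+2)$-gon faces; the right angles of the faces, the order-$4$ symmetry exchanging the two $(2g+2)$-gons, and the order-$(2g+2)$ rotation about the axis contribute the angles $\pi/2$, $\pi/4$, $\pi/(2g+2)$. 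The problem thus reduces to the combinatorial one of choosing a torsion-free face-pairing of finitely many copies of $R(2g+2)$ whose restriction to $P$ uniformizes $Y_g$ rather than $X_g$.

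First I would fix the roles of the generators: the order-$(2g+2)$ rotation $a$ about the axis and the order-$4$ symmetry $b$ exchanging the two $(2g+2)$-gons realize the generators of the presentations \eqref{AM group} and \eqref{K group}, which differ only in the relation controlling $b^2ab^2$. The Accola-Maclachlan case corresponds to the untwisted relation $[a,b^2]=1$; to obtain $K_g$ I would instead compose the identification implementing $b^2$ with the involution $a\mapsto a^{g+2}$ of the cyclic symmetry group of a $(2g+2)$-gon, thereby enforcing $b^2ab^2=a^{g+2}$. That this yields a consistent, well-defined pairing uses $(g+2)^2\equiv 1\pmod{2g+2}$, which holds precisely because $g$ is odd. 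Since the twist is available for every $g\equiv 3\pmod{4}$, this produces a closed orientable hyperbolic $3$-manifold $V_g$ into which $Y_g$ embeds as a totally geodesic surface, giving the geodesic embedding in the full range of genera.

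For the bounding statement I would additionally seek an orientation-reversing isometric involution $\tau$ of $V_g$ whose fixed-point set is a single connected copy of $Y_g$; cutting along this copy and keeping one side yields a compact orientable hyperbolic bordism $N_g$ with $\partial N_g=Y_g$. Such a $\tau$ should be induced by the reflection in the mirror plane $P$, which inverts $a$, sending $a\mapsto a^{-1}$. The hard part — and the step I expect to be the main obstacle — is the compatibility of this reflection with the twisted $b^2$-identification: for the reflection to descend to a genuine fixed-point involution of the required type it must close up coherently with the twist as one passes around the belt of pentagons of $R(2g+2)$, and tracking this forces a condition on the parity of $(g-3)/4$. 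I expect the two requirements to be compatible exactly when $(g-3)/4$ is even, that is when $g\equiv 3\pmod{8}$; for $g\equiv 7\pmod{8}$ the reflection should fail to close up to an involution with connected fixed locus, so the half-construction does not deliver a bordism of the required type. The remaining verifications — torsion-freeness of the pairing group, compactness, and that $\partial N_g$ is all of $Y_g$ rather than a proper quotient — are routine and parallel the Accola-Maclachlan case.
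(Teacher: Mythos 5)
Your overall strategy is the right one in spirit---build a closed orientable $3$-manifold from copies of the L\"obell polytope $R(2g+2)$ containing a totally geodesic surface uniformized by a ``twisted'' version of the Accola--Maclachlan pairing, then use a symmetry to pass to a bounding---but the two steps that actually carry the content of the theorem are left as expectations rather than proved. First, you never verify that the surface produced by your twisted $b^2$-identification \emph{is} the Kulkarni surface. The characterization you invoke requires exhibiting an action of $K_g$ of Equation~\ref{K group} with signature $(0;2,4,2g+2)$ on the quotient surface, and checking the relation $b^2ab^2=a^{g+2}$ for concrete isometries; this identification is the heart of the matter (in the paper it is Theorem~\ref{is Kulkarni}, which realizes $Y_g$ as a free $\mathbb{Z}_2$-quotient of the genus-$(2g-1)$ surface $\mathcal{M}_{\lambda_g''}$ built from a rank-$3$ colouring of $P_g$, with the twist encoded in the non-trivial action of $\Phi_{\lambda_g''}(r_2)$ on $\mathbb{Z}_2^3$). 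The congruence $(g+2)^2\equiv 1\pmod{2g+2}$ only guarantees that the abstract group presentation is consistent; it does not show that your face-pairing is well defined, that the resulting surface is connected of genus $g$, or that its full conformal group does not strictly contain $K_g$ (which would make it a different surface). Second, the mod-$8$ dichotomy is asserted (``I expect the two requirements to be compatible exactly when $(g-3)/4$ is even'') rather than derived. In the paper this condition falls out of an explicit computation: a square root $\varphi=(s,v)$ of the covering involution $d_g=\big((r_1r_2)^{g+1},e_1+e_3\big)$ must have $s=(r_1r_2)^{\pm(g+1)/2}$ and $v+\Phi_{\lambda_g''}(s)(v)=e_1+e_3$, which is solvable precisely when $\Phi_{\lambda_g''}(s)$ acts non-trivially, i.e.\ when $(g+1)/2\not\equiv 0\pmod 4$, giving $g\equiv 3\pmod 8$. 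Without some computation of this kind you have not proved the bounding statement for any genus.

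There is also a structural difference in how you propose to obtain the bordism. You look for an orientation-reversing involution $\tau$ of the ambient $3$-manifold whose \emph{fixed-point set} is a connected copy of $Y_g$, and then cut along it. The paper instead produces an orientation-reversing, \emph{fixed point-free} involution of $Y_g$ itself and applies Lemma~\ref{lemma-embed-bound} to the geodesic embedding of Corollary~\ref{Kulkarni embeds}. Your route is viable in principle, but it requires an existence statement (a reflection of $V_g$ with prescribed connected fixed locus) that is at least as delicate as the one you are trying to avoid, and you give no mechanism for producing it; note also that $Y_g$ admits an anticonformal fixed point-free involution only for $g\equiv 3\pmod 8$, so any correct argument must ultimately detect this obstruction on the surface itself rather than on the belt of pentagons of $R(2g+2)$. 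As it stands the proposal is a plausible programme, not a proof.
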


We note that, from the proof of Corollary \ref{Kulkarni embeds}, the geodesic embedding and the geometric bounding are both into compact, orientable manifolds obtained from the face-pairings of $8$ copies of the L\"obell polytopes $R(2g+2)$, from which the same comments on volume and arithmeticity follow as above.

We will finally show an explicit geodesic embedding, and geometric bounding for odd genus, for another famous family of surfaces, the \textit{Wiman type II surfaces $Z_g$} of genus $g\ge 2$. These surfaces were described by Wiman \cite{Wiman} by the algebraic curve equation

\begin{equation}
    y^2 = x^{2g+1}-x,
\end{equation}
and they are entirely classified as the unique surface up to isometry admitting a conformal automorphism of order $4g$ \cite[Theorem 2]{Nakagawa} for any $g\neq 3$. In the case $g=3$, there is a second surface with an automorphism of order $4g=12$, but the Wiman type II is the only surface with a cyclic conformal automorphism subgroup of order $4g$ acting with signature $(0;2,4g,4g)$ \cite[p. 439]{Nakagawa}.

Using this characterization, we will prove the following:

\begin{theoremx} 
The Wiman type II surface $Z_g$ of genus $g$ embeds geodesically for every $g\ge 2$, and bounds geometrically an orientable manifold $N_g$ for every odd $g\ge 2$.
\end{theoremx}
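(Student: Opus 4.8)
The plan is to run the same reflection-group construction used for the Accola--Maclachlan and Kulkarni surfaces, but with the triangle data dictated by the Wiman signature. By the characterization recalled above, $Z_g$ is the unique genus-$g$ surface admitting a cyclic group of order $4g$ whose quotient orbifold has signature $(0;2,4g,4g)$; writing $Z_g=\mathbb{H}^2/\Gamma_g$, this means $\Gamma_g$ is a normal surface subgroup of the von Dyck group $D(2,4g,4g)$, whose orientation extension is the reflection triangle group $\Delta(2,4g,4g)$. It therefore suffices to realize $\Delta(2,4g,4g)$ as the stabilizer of a totally geodesic plane (a \emph{wall}) in a cocompact hyperbolic reflection group in dimension three, and then to pass to a manifold cover in which that wall descends to an embedded copy of $Z_g$.

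The first and least routine step is to produce the three-dimensional ambient group, and here the situation genuinely differs from the Accola--Maclachlan case. A wall of a right-angled polytope is tiled by right-angled polygons, whose interior angles are all $\pi/2$, so exactly four tiles meet at every vertex; such a wall can never realize the $\{4g,4g\}$-pattern (interior angle $\pi/(2g)$) underlying the signature $(0;2,4g,4g)$. I would instead construct, via Andreev's theorem, a uniform family of compact hyperbolic Coxeter polytopes $P_g$ of ``drum'' type over a $4g$-gon, whose two $4g$-gonal faces meet their neighbours at dihedral angle $\pi/2$ while consecutive side faces meet at dihedral angle $\pi/(2g)$. A short link computation (a spherical triangle with angles $\pi/2,\pi/2,\pi/(2g)$) shows that this forces the interior angles of each $4g$-gonal face to be $\pi/(2g)$, so its supporting hyperplane is a wall tiled in the pattern $\{4g,4g\}$ with stabilizer $\Delta(2,4g,4g)$, as required; note that the only non-right dihedral angle, $\pi/(2g)$, has Coxeter label $2g$, which is \emph{even}, and this is exactly what will keep $P_g$ amenable to the colouring machinery below. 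The order-$4g$ rotation of $P_g$ about the axis through the two $4g$-gon centres is the symmetry that will descend to the defining $\mathbb{Z}_{4g}$-action on $Z_g$.

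Next I would colour the faces of $P_g$ by a rotation-equivariant homomorphism $\lambda\colon\Gamma(P_g)\to(\mathbb{Z}/2)^3$ that is injective on every vertex stabilizer, exactly as in the proofs of Corollaries \ref{AM embeds} and \ref{Kulkarni embeds}; the evenness of the label $2g$ guarantees that the Coxeter relations survive in the abelian target, so $\ker\lambda$ is torsion-free and $\widehat N_g=\mathbb{H}^3/\ker\lambda$ is a closed orientable hyperbolic $3$-manifold assembled from $8$ copies of $P_g$, inside which the chosen wall descends to a closed totally geodesic surface $S$. I would then check that $S$ is embedded rather than merely immersed (by verifying that $\lambda$ separates the two sides of the wall), that $S$ has genus $g$ by a Gauss--Bonnet count, and that the inherited order-$4g$ symmetry acts with signature $(0;2,4g,4g)$; the uniqueness characterization quoted above then forces $S\cong Z_g$ isometrically, which gives the geodesic embedding for every $g\ge 2$.

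Finally, for the geometric bounding I would seek an orientation-reversing isometric involution $\tau$ of $\widehat N_g$ whose fixed-point set is exactly $S=Z_g$; cutting along $S$, equivalently taking one of the two halves into which $S$ separates $\widehat N_g$, then yields a compact orientable hyperbolic $3$-manifold $N_g$ with totally geodesic boundary $Z_g$. Such a $\tau$ exists precisely when the wall is separating and its two sides are interchanged by a symmetry of the coloured polytope, i.e.\ when the copies of $P_g$ admit a consistent ``checkerboard'' two-colouring across the wall, and this is where the parity of $g$ enters and restricts the conclusion to odd $g$. I expect this parity analysis, together with the uniform construction of the polytopes $P_g$ and the verification that the wall is intrinsically isometric to $Z_g$ rather than to some other genus-$g$ surface, to be the main obstacle: it is exactly the point at which the non-right-angled geometry of the $(2,4g,4g)$ wall must be reconciled with the $(\mathbb{Z}/2)^3$-colouring machinery that produced the Accola--Maclachlan and Kulkarni examples.
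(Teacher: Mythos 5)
Your proposal diverges from the paper at the very first step, and that step fails. The ``drum'' polytope you want to build via Andreev's theorem --- combinatorially a prism over a $4g$-gon whose two $4g$-gonal faces meet every side face at $\pi/2$ --- does not exist as a compact hyperbolic polytope: the four faces (top, side $i$, bottom, side $j$) with $i,j$ non-adjacent form a prismatic $4$-circuit whose dihedral angles are all $\pi/2$ regardless of how the side faces meet each other, so the angle sum is exactly $2\pi$, violating Andreev's strict inequality for prismatic $4$-circuits. This obstruction is precisely why the L\"obell polytopes $R(2n)$ interpose two bands of pentagons between the two $n$-gonal faces instead of being prisms. Even granting some replacement polytope with a $(2,4g,4g)$ wall, your colouring step also breaks: once a dihedral angle $\pi/(2g)$ appears, the vertex stabilizers are $(2,2,2g)$ triangle groups of order $8g$, and no homomorphism to $(\mathbb{Z}/2)^3$ (or to any elementary abelian $2$-group) is injective on them, so $\ker\lambda$ necessarily contains torsion and $\mathbb{H}^3/\ker\lambda$ is not a manifold. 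The Davis--Januszkiewicz/Kolpakov--Slavich machinery the paper relies on is genuinely a right-angled tool; ``even labels'' do not rescue it.

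The paper avoids realizing the $(2,4g,4g)$ geometry as a wall altogether. It takes the right-angled $4g$-gon with its rank-$2$ colouring, so that $\mathcal{M}_{\lambda_g'}$ is the genus-$(2g-1)$ Accola--Maclachlan surface by Theorem \ref{is AM}, and exhibits $Z_g$ as the quotient $\mathcal{M}_{\lambda_g'}/_{\langle c_g\rangle}$ by the fixed-point-free coloured involution $c_g=\big((r_1r_2)^{2g},e_1+e_2\big)$; the order-$4g$ automorphism with signature $(0;2,4g,4g)$ then descends from the coloured isometry group, and uniqueness identifies the quotient with $Z_g$ (Theorem \ref{is Wiman}). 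The geodesic embedding comes from extending $c_g$ to the fixed-point-free involution $S=(r,e_2+e_3)$ of the L\"obell manifold $\mathcal{M}_{\Lambda_g'}$, so that the right-angled wall embedding $\mathcal{M}_{\lambda_g'}\hookrightarrow\mathcal{M}_{\Lambda_g'}$ descends to $Z_g\hookrightarrow\mathcal{M}_{\Lambda_g'}/_{\langle S\rangle}$. Finally, the parity restriction for bounding does not come from a checkerboard condition on the ambient $3$-manifold but from Corollary \ref{Wiman involutions}: $Z_g$ itself admits a fixed-point-free orientation-reversing involution exactly when $g$ is odd, and Lemma \ref{lemma-embed-bound} promotes the embedding to a bounding. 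If you want to salvage your approach, the key missing idea is this passage to a free quotient of a right-angled object rather than a direct reflective realization of the $(2,4g,4g)$ signature.
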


We also note that, since the surfaces $Z_g$ are commensurable with the triangle groups $(2,4,4g)$, they are arithmetic only for $g=2,3$ (with classes III, V, respectively \cite{Takeuchi}). Moreover, from the proof of Corollary \ref{Wiman embeds}, the manifolds $N_g$ are commensurable with $R(4g)$, obtained by the face-pairing of $4$ copies of the polyhedron, and have volume asymptotically equivalent to $20v_3 g$. In addition, the reflection groups $\Gamma\big(R(4g)\big)$ are only arithmetic in the case $g=2$. 




The constructions in Theorems \ref{is AM}, \ref{is Wiman}, \ref{is Kulkarni} will also give the following characterization of the above surfaces:

\begin{theoremx}
The Accola-Maclachlan, Wiman type II and Kulkarni surfaces of genus $g$ can be obtained by the side-pairing of $4$ right-angled $(2g+2)$-gons, $2$ right-angled $4g$-gons and $4$ right-angled $(2g+2)$-gons, respectively. Labelling the edges of the polygons in clockwise order with numbers $1,2,...$ and the copies of polygons by vectors in $\mathbb{Z}_2 ^2$ ($\mathbb{Z}_2$, $\mathbb{Z}_2 ^2$), we have the following description of the orientation-preserving edge pairings:

\begin{enumerate}
    \item[AM] $(i,v)$ is paired with $(i,v+e_1)$ if $i$ is odd; and with $(i,v+e_2)$ if $i$ is even.
    \item[W2] $(i,0)$ is paired with $(i,1)$ if $i$ is odd; and with $(2g+i,1)$ if $i$ is even.
    \item[K] $(i,v)$ is paired with $(i,v+e_1)$ if $i \equiv 1 (\, \text{mod}\, 4)$; with $(i,v+e_2)$ if $i \equiv 2 (\, \text{mod}\, 4)$; with $(g+1+i,v+e_1)$ if $i \equiv 3 (\, \text{mod}\, 4)$; and with $(g+1+i,v+e_2)$ if $i \equiv 0 (\, \text{mod}\, 4)$,
\end{enumerate}
where the edge labels $N=\{1,2,...\}$ are seen in $\mathbb{Z} \,(\text{mod} \, |N|)$. 
\end{theoremx}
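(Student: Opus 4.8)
The plan is to read the stated edge-pairings off the geometry of the triangle tessellations and then identify the resulting surfaces through the characterizations recalled in the introduction. The common geometric input is that the barycentric subdivision of a regular right-angled $(2g+2)$-gon $P$ (resp. $4g$-gon) cuts it into $4g+4$ (resp. $8g$) copies of the hyperbolic triangle with angles $\pi/2,\pi/4,\pi/(2g+2)$ (resp. $\pi/2,\pi/4,\pi/(4g)$): the right angle sits at an edge-midpoint of $P$, the $\pi/4$ angle at a vertex of $P$ (two such triangles filling the right angle), and the acute angle at the centre. Hence these polygons tile $\mathbb{H}^2$ with exactly four meeting at each vertex, recovering the $(2,4,2g+2)$-triangle reflection group, and a Gauss--Bonnet area count shows that $4$ copies of $P$ (resp. $2$ copies of the $4g$-gon) carry the area $4\pi(g-1)$ of a genus-$g$ surface. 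This is what fixes the numbers $4$, $2$, $4$ of polygons in the three cases.

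I would then argue in the forward direction: take the stated labelled copies, glue each edge to its partner by the unique orientation-preserving isometry of $\mathbb{H}^2$ matching the two congruent edges with reversed orientation, and invoke the Poincar\'e polygon theorem. Two things must be checked. First, the pairing is a fixed-point-free involution on edges, which is immediate from the formulas (e.g. in case AM the rule $(i,v)\mapsto(i,v+e_1)$ for odd $i$ is visibly its own inverse). Second, each vertex cycle closes up with total angle exactly $2\pi$; since every corner contributes $\pi/2$, this is the purely combinatorial statement that every vertex cycle has length $4$, which one verifies by tracing cycles through the parity- and $\mathbb{Z}_2^2$-bookkeeping of the pairing rules. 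One then gets a smooth closed orientable hyperbolic surface, and the count $\chi=V-E+F$, with $F$ the number of polygons, $E$ half the total number of edges, and $V$ one quarter of the total number of corners, returns $\chi=2-2g$ in all three cases (for AM: $4-(4g+4)+(2g+2)=2-2g$), so the quotient has genus $g$.

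It remains to identify the group, and this is where the main obstacle lies. I would exhibit the symmetries geometrically: the element $a$ is the one-step rotation $i\mapsto i+1$ of the edge labels composed with the involution of the copy-labels needed to make it respect the pairing (for AM the swap $e_1\leftrightarrow e_2$ forced by the odd/even dichotomy), while the order-$4$ generator $b$ is the rotation by $\pi/2$ about a polygon vertex, which cyclically permutes the four $\mathbb{Z}_2^2$-copies meeting there; for the Wiman surface the single cyclic generator of $\mathbb{Z}_{4g}$ is built analogously from a one-step rotation together with the swap of the two copies. A direct check that these are orientation-preserving automorphisms compatible with the side-pairing, that they satisfy the defining relations of $AM_g$, $K_g$, or $\mathbb{Z}_{4g}$, and that the quotient orbifold carries cone points of orders $2$ (edge-midpoint), $4$ (polygon vertex) and $2g+2$ (centre) — i.e. signature $(0;2,4,2g+2)$, resp. $(0;2,4g,4g)$ — then pins down the surface via the uniqueness statements quoted in the introduction. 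The delicate point is precisely this last step, and in particular distinguishing $AM_g$ from $K_g$: the difference is carried entirely by the extra shift $i\mapsto g+1+i$ in the Kulkarni pairing, and confirming that the combinatorics of the shifted gluing reproduces the relation $b^2ab^2=a^{g+2}$ rather than $[a,b^2]=1$ — and hence an inequivalent surface — is where the real work concentrates.
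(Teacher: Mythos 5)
Your proposal is correct in outline but proceeds in the opposite direction from the paper. The paper never proves this theorem by starting from the pairings: it obtains the statement as a direct byproduct of Theorems \ref{is AM}, \ref{is Wiman} and \ref{is Kulkarni}, since by Remark \ref{orbifold definition} the coloured manifold $\mathcal{M}_\lambda\cong(\mathcal{P}\times\mathbb{Z}_2^k)/_\sim$ \emph{is} by definition a gluing of $2^k$ labelled copies of the polygon along equally-labelled edges, with the copy-label shifted by the colour of that edge; the rules AM, W2, K are then just read off from the colourings $\lambda_g$, $\lambda_g'$, $\lambda_g''$ together with the extra identifications coming from the free involutions $c_g$ and $d_g$ (whose rotational parts $(r_1r_2)^{2g}$ and $(r_1r_2)^{g+1}$ produce the label shifts $2g+i$ and $g+1+i$). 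You instead take the pairings as given, build the surface via Poincar\'e's polygon theorem, and re-identify it by exhibiting $a$ and $b$ on the glued object and checking the relations and the quotient signature. Both routes end at the same uniqueness statements from the introduction, and your area, Euler-characteristic and vertex-cycle checks are all sound (the length-$4$ cycles do close up, using $g+1\equiv 0\ (\mathrm{mod}\ 4)$ in case K). What the paper's route buys is that the hardest step you flag --- verifying $b^2ab^2=a^{g+2}$ rather than $[a,b^2]=1$ for the Kulkarni pairing --- becomes a short computation in the semidirect product $\mathrm{Adm}_{\lambda_g''}(P_g)\ltimes\mathbb{Z}_2^3$ (the identity $b^2ab^2=a^{g+2}d_g$ in the proof of Theorem \ref{is Kulkarni}), whereas in your setup it must be traced by hand through the shifted gluing combinatorics; your route buys self-containedness and avoids the eight-fold cover entirely. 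Be aware that your plan as written defers exactly these decisive verifications (the cycle tracing and the relation check), so to be complete it would need them carried out, at which point you would essentially be re-deriving the $\mathrm{Sym}(\mathcal{P})\ltimes V$ bookkeeping the paper sets up once and reuses three times.
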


We note that this characterization is not unique. \cite[Theorem 6.3]{Weaver}, for instance, characterizes these families and others by face-pairings of a single suitable regular polygon with suitable inner angles; and the Wiman type II curve in genus $g=2$, also known as the \textit{Bolza curve}, has an additional characterization as side-pairings of $6$ regular octagons with inner angles $\frac{2\pi}{3}$. The above characterization, however, has the advantage of using only right-angled polygons.



We'd like to stress, however, that this paper does not answer the question posed by Zimmermann \cite{Zimmermann} and \cite{KRS} regarding the geometric bounding of the \textit{Klein quartic}, the surface which realizes the Hurwitz bound $|\mathrm{Aut}(S_g)|\le 84(g-1)$ in the lowest possible genus $g=3$. In fact, having algebraic curve equation $y^3+yx^3+x=0$ \cite{Levy}, the Klein Quartic does not belong to  the above families. It also does not admit fixed-point free, orientation-reversing involutions \cite{Singerman 2}.

\subsection{Structure of the paper}

Section \ref{embedding} defines geodesic embeddings and how to promote them to geometric boundings. Section \ref{colouring} recalls some useful basic results from the theory of colourings of right-angled polytopes. Section \ref{AM} describes the explicit construction of geometric boundings for the Accola-Maclachlan surfaces $X_g$ of every genus. Section \ref{Wiman} described the construction for the Wiman surfaces $Z_g$. Finally, Section \ref{Kulkarni} describes the construction for the the Kulkarni surfaces $Y_g$. The sections were organized by the complexity of the constructions and each uses results from the previous ones.

\medskip

\noindent{\bf Acknowledgements:}~{\normalfont The author is grateful to Alan Reid for the insightful talks and comments on the paper; to Benson Farb for the suggestions and references that allowed this work; to Tadeusz Januszkiewicz for comments and suggestions on an earlier version of the paper; and to Leone Slavich and Stefano Riolo for corrections and fruitful discussions. 
}

\section{Geodesic embeddings and geometric boundaries}\label{embedding}

Throughout this article, we will consider only manifolds that are closed, orientable, hyperbolic and complete, unless explicitly stated.

We say that a manifold $N^n$ \textit{embeds geodesically} if there is an embedding $i:N\to M^{n+1}$ such that $i(N)$ is \textit{totally geodesic} in $M$, that is, if the geodesics in $(M,g)$ are still geodesic in $(i(N),g)$.

We say that a manifold $N^n$ \textit{bounds geometrically} if there is a compact manifold $M^{n+1}$ with totally geodesic boundary such that $\partial M$ is isometric to $N$.

Clearly, bounding geometrically is equivalent to embedding geodesically as a separating totally geodesic submanifold: from the embedding you can cut along to produce two compact components with a connected totally geodesic boundary; and from the geometric bounding you can double the manifold along the boundary to produce a geodesic embedding.

A non-separating embedding, however, is not enough to produce a geometric boundary: if $N$ embeds geodesically in $M$ in a non-separating way, then by cutting $M$ along $N$ we obtain a manifold $M'$ with boundary components $N^+ \sqcup N^-$, where each component is given opposite orientation.

In some cases, nevertheless, it is still possible to promote a geodesic embedding to a geometric bounding. We recall the following \cite[Lemma 3.1]{FKR}:

\begin{lemma}\label{lemma-embed-bound}
Let $N$ be an orientable hyperbolic $n$-manifold that has a fixed point free involution $\varphi \in \mathrm{Isom}(N)$. If $N$ embeds geodesically then it also bounds geometrically.
\end{lemma}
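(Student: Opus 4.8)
The plan is to reduce to the separating case and, when the embedding fails to separate, to use $\varphi$ to manufacture a \emph{free} isometric involution on a doubled manifold whose quotient is the desired bound. Suppose $i\colon N\hookrightarrow M$ is a geodesic embedding into a closed hyperbolic $(n+1)$--manifold. If $i(N)$ separates $M$, then cutting $M$ along $i(N)$ yields two compact hyperbolic manifolds, each with totally geodesic boundary isometric to $N$, and either one already exhibits $N$ as a geometric boundary; here $\varphi$ plays no role. The entire content of the lemma is therefore the non-separating case, which is exactly where the involution enters.

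So assume $i(N)$ is non-separating. Cutting $M$ along $i(N)$ produces a connected compact hyperbolic manifold $M'$ with totally geodesic boundary $\partial M'=N^{+}\sqcup N^{-}$, where the canonical identifications give isometries $N^{\pm}\cong N$. I would then take a second copy $\overline{M'}$ and glue it to $M'$ along the single boundary component $N^{-}$, using $\varphi$ as the gluing isometry $N^{-}\to \overline{N^{-}}$ (a ``twisted double'' along one boundary component only). Since $\varphi$ is an isometry and $N^{-}$ is totally geodesic, the two hyperbolic metrics match smoothly across the gluing locus, so $W=M'\cup_{\varphi}\overline{M'}$ is a compact hyperbolic manifold whose boundary is $N^{+}\sqcup \overline{N^{+}}$, two totally geodesic copies of $N$, and which contains the totally geodesic surface $N^{-}$.

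Next I would introduce the swap involution $\Phi\colon W\to W$ exchanging $M'$ and $\overline{M'}$. The two points to verify are that $\Phi$ is well defined and that it is free. Well-definedness on the gluing locus uses only $\varphi^{2}=\mathrm{id}$: a point $x\in N^{-}$ is glued to $\overline{\varphi(x)}$, and $\Phi(x)=\overline{x}$ is itself identified with $\varphi(x)$, consistently with $\Phi(\overline{\varphi(x)})=\varphi(x)$, because $\varphi(\varphi(x))=x$. Freeness is the one step that uses the hypothesis in an essential way, and it is the main point of the argument: a fixed point of $\Phi$ would have to lie on the gluing surface and would force $\varphi(x)=x$, contradicting that $\varphi$ is fixed point free. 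Since $\Phi$ is then a free isometric involution, $W/\Phi$ is a compact hyperbolic manifold, and $\Phi$ carries $N^{+}$ isometrically onto $\overline{N^{+}}$, so that $\partial(W/\Phi)=(N^{+}\sqcup\overline{N^{+}})/\Phi$ is a single totally geodesic copy of $N$. Hence $N$ bounds $W/\Phi$ geometrically.

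Finally I would address orientations, which the applications require. The manifold $W$ can be oriented by giving $\overline{M'}$ the orientation for which the gluing along $N^{-}$ is orientation-reversing on the boundary; a short local computation in a collar $N\times(-\varepsilon,\varepsilon)$ of the gluing surface shows that $\Phi$ acts there by $(x,s)\mapsto(\varphi(x),-s)$, so it preserves orientation precisely when $\varphi$ reverses the orientation of $N$. In that case $W/\Phi$ is orientable, which is the situation realized by the orientation-reversing fixed-point-free involutions used on the surfaces in this paper. The only genuine obstacle is the freeness of $\Phi$, which the hypothesis supplies at once; the remaining steps, namely smoothness of the glued metric across a totally geodesic surface and the orientation bookkeeping, are routine.
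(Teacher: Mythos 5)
Your proposal is correct and is essentially the paper's argument: in the non-separating case the paper cuts $M$ along $N$ to get $M'$ with $\partial M' = N\sqcup N$ and then self-identifies one boundary component via $\varphi$, and your quotient $W/\Phi$ of the twisted double is precisely that same space $M'_\varphi$. The only difference is presentational --- realizing the self-identification as the quotient of $W=M'\cup_{\varphi}\overline{M'}$ by the free swap involution is a slightly more careful way of justifying that the result is a hyperbolic manifold, and your collar computation $(x,s)\mapsto(\varphi(x),-s)$ makes explicit the paper's closing remark that orientability of the quotient corresponds to $\varphi$ being orientation-reversing.
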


\begin{proof}
Let $N$ embed into an orientable manifold $M$ as a totally geodesic submanifold of codimension $1$. Denote by $M'$ the manifold obtained by cutting $M$ along $N$ and taking a connected component. Then either $\partial M' = N$ and we are done, or $\partial M' = N \sqcup N$, and we can quotient out one copy of $M$ in $\partial N$ by self-identifying it via $\varphi$. Given that $\varphi$ is a fixed point free involution, the resulting metric space $M'_\varphi$ will be a hyperbolic manifold with a single boundary component isometric to $N$. Moreover, $M'_\varphi$ is orientable or not depending on whether $\varphi$ is orientation-reversing or not. 
\end{proof}

In this article, we will also need the following useful fact concerning the immersion of isometry groups of quotient manifolds, specially in the case that the action group $H$ is generated by the fixed-point free involution:

\begin{lemma}\label{quotient lemma}
Let $M$ be a hyperbolic manifold and $H < \mathrm{Isom}(M)$ such that $H$ acts freely. Then $M/_H$ is also a hyperbolic manifold and $$N_{\mathrm{Isom}(M)}(H)/_H < \mathrm{Isom}\big(M/_H\big).$$ If $M$ is orientable, the statement is also true when replacing $\mathrm{Isom}$ with $\mathrm{Isom}^+$, and in this case it also follows that $M/_H$ is orientable.
\end{lemma}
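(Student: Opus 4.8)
The plan is to build the claimed inclusion as an honest group homomorphism and then identify its kernel. First I would record the standing hypotheses: since $M$ is closed and hyperbolic, $\mathrm{Isom}(M)$ is finite, so $H$ is a finite group; being finite it acts properly discontinuously, and since by assumption it acts freely, the quotient map $\pi\colon M\to M/_H$ is a covering. The hyperbolic metric on $M$ is $H$-invariant because $H$ acts by isometries, so it descends to a metric on $M/_H$ for which $\pi$ is a local isometry; this exhibits $M/_H$ as a closed hyperbolic manifold, settling the first assertion.

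Next I would define the candidate homomorphism $\Phi\colon N_{\mathrm{Isom}(M)}(H)\to \mathrm{Isom}(M/_H)$. For $g$ in the normalizer, set $\Phi(g)=\bar g$ where $\bar g\big([x]\big)=[g(x)]$, with $[x]$ denoting the $H$-orbit of $x$. Well-definedness is exactly where the normalizer hypothesis enters: if $[x]=[y]$ then $y=h(x)$ for some $h\in H$, and $g(y)=g\big(h(x)\big)=\big(ghg^{-1}\big)\big(g(x)\big)$ with $ghg^{-1}\in H$, so $[g(y)]=[g(x)]$. Since $\bar g\circ\pi=\pi\circ g$ and both $g$ and $\pi$ are local isometries, $\bar g$ is a local isometry; it is a bijection with inverse $\overline{g^{-1}}$, hence an isometry of $M/_H$. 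It is then immediate that $\Phi$ is a group homomorphism.

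The crux is the computation of $\ker\Phi$, which I expect to be the main obstacle. Clearly $H\subseteq\ker\Phi$. For the reverse inclusion, suppose $\bar g=\mathrm{id}$; then for every $x$ there is some $h\in H$ with $g(x)=h(x)$, that is $M=\bigcup_{h\in H}\mathrm{Fix}(h^{-1}g)$. The delicate point is that a priori the element $h$ depends on $x$, and I must promote this pointwise statement to a single global $h$. Here I would invoke the rigidity of isometries of a connected Riemannian manifold: a nontrivial isometry of $M$ has fixed-point set with empty interior, so each $\mathrm{Fix}(h^{-1}g)$ is either all of $M$ or nowhere dense. Since $H$ is finite and $M$ is connected, hence a Baire space, a finite union of closed nowhere-dense sets cannot exhaust $M$; therefore $\mathrm{Fix}(h^{-1}g)=M$ for some $h$, i.e. $g=h\in H$. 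Thus $\ker\Phi=H$ and $\Phi$ descends to the asserted injection $N_{\mathrm{Isom}(M)}(H)/_H\hookrightarrow\mathrm{Isom}(M/_H)$.

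Finally, for the oriented version I would assume $H<\mathrm{Isom}^+(M)$ with $M$ oriented. Then $H$ acts by orientation-preserving diffeomorphisms, so the orientation of $M$ descends through the covering $\pi$ and $M/_H$ is orientable. Each $g\in N_{\mathrm{Isom}^+(M)}(H)$ is orientation-preserving, and $\bar g\circ\pi=\pi\circ g$ forces $\bar g$ to be orientation-preserving as well, so $\Phi$ now takes values in $\mathrm{Isom}^+(M/_H)$; the kernel computation above is unchanged since all of $H$ lies in $\mathrm{Isom}^+(M)$, giving $N_{\mathrm{Isom}^+(M)}(H)/_H\hookrightarrow\mathrm{Isom}^+(M/_H)$.
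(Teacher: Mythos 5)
Your proof is correct and follows essentially the same route as the paper: descend the metric through the free covering $M\to M/_H$, send each element of the normalizer to the induced map on orbits, and identify the kernel with $H$. The only difference is that you carefully justify the kernel computation (via the Baire/nowhere-dense-fixed-set argument), which the paper simply asserts.
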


\begin{proof}
Since $M$ is hyperbolic, $\mathrm{Isom}(M)$ is finite, from which it follows that $M\mapsto M/_H$ is a regular cover for any $H<\mathrm{Isom}(M)$ acting freely. 

Take one such $H$ and any $\varphi \in \mathrm{Isom}(M)$. If $\varphi H = H \varphi$, we can naturally define a map $\tilde{\varphi}:M/_H \to M/_H$ by putting $\tilde{\varphi}([x])=[\varphi(x)]$, and this is an isometry of $M/_H$. We thus have a map $\Psi:N_{\mathrm{Isom}(M)}(H) \to \mathrm{Isom}\big(M/_H\big)$ which maps $\varphi$ to $\tilde{\varphi}$, and whose kernel is precisely $H$. It follows that there is an injection $\Psi/_{\text{ker}\, \Psi}$ from $N_{\mathrm{Isom}(M)}(H)/_H$ into $\mathrm{Isom}\big(M/_H\big)$.
\end{proof}

\section{Colourings}\label{colouring}

We now introduce a very useful technique for the construction of explicit examples.

\subsection{Definitions}

A compact hyperbolic polytope $\mathcal{P} \subset \mathbb{H}^n$ is called {\itshape right--angled} if any two codimension $1$ faces (or facets, for short) are either intersecting at a right angle or disjoint. 


Let $\mathcal{P} \subset \mathbb{X}^n$ be a compact, right-angled polytope with the set of facets $\mathcal{F}$. A \textit{colouring} of $\mathcal{P}$ is a map $\lambda: \mathcal{F} \rightarrow V$, where $V$ an $\mathbb{Z}/2 \mathbb{Z}$--vector space. The map $\lambda$ is called \textit{proper} if, for every vertex $v=F_1\cap \ldots \cap F_n$, the vectors $\lambda(F_1), \ldots, \lambda(F_n)$ are linearly independent.

The \textit{rank} of $\lambda$ is the $\mathbb{Z}/2\mathbb{Z}$--dimension of $\mathrm{im}\, \lambda$. We will always assume in this article that colourings are surjective, in the sense that the image of the map $\lambda$ is a generating set of vectors for $V$.

A colouring of a right-angled $n$--polytope $\mathcal{P}$ naturally defines a homomorphism, which we still denote by $\lambda$ without much ambiguity, from the associated right--angled 
reflection group~$\Gamma(\mathcal{P})$ 
into $V$ with its natural group structure. Being a Coxeter polytope, $\mathcal{P}$ has a natural orbifold structure as the quotient $\mathbb{H}^n /_{\Gamma(\mathcal{P)}}$.

\begin{prop}[\cite{DJ91}, Proposition 1.7]\label{DJ}
If the colouring $\lambda$ is proper, then $\ker \lambda < \Gamma(\mathcal{P})$ is torsion-free, and $\mathcal{M}_\lambda = \mathbb{H}^n /_{\ker \lambda}$ is a closed hyperbolic manifold. Moreover, $\mathcal{M}_\lambda \to \mathcal{P}$ is an orbifold covering of degree $|V|=2^{\text{rk} \, \lambda}$.
\end{prop}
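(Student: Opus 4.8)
The plan is to realise $\lambda$ as a surjective homomorphism of groups $\lambda\colon \Gamma(\mathcal{P})\to V$ sending each generating reflection $r_F$ to $\lambda(F)\in V$; this is well defined precisely because $V$ is abelian of exponent $2$, so the defining Coxeter relations $r_F^2=1$ and $(r_Fr_{F'})^2=1$ (for intersecting facets) are respected. Granting this, the entire statement reduces to one key claim: that $\ker\lambda$ is torsion-free. Indeed, $\Gamma(\mathcal{P})$ is a cocompact lattice in $\mathrm{Isom}(\mathbb{H}^n)$ acting properly discontinuously with quotient orbifold $\mathcal{P}$, so its point stabilisers are finite; once we know $\ker\lambda$ is torsion-free, these stabilisers become trivial, the action of $\ker\lambda$ on $\mathbb{H}^n$ is free and properly discontinuous, and $\mathcal{M}_\lambda=\mathbb{H}^n/\ker\lambda$ is a hyperbolic manifold.

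The heart of the argument, and the step I expect to be the main obstacle, is the torsion-freeness. I would use two facts. First, since $V$ is abelian, $\lambda$ is constant on conjugacy classes of $\Gamma(\mathcal{P})$. Second, I would invoke the classical structure theorem for Coxeter groups: every element of finite order is conjugate into a finite standard parabolic (special) subgroup. For the right-angled reflection group of a compact polytope these finite special subgroups are exactly the groups generated by sets of pairwise-commuting reflections, i.e.\ by collections of pairwise-orthogonal (hence pairwise-intersecting) facets; by simplicity of $\mathcal{P}$ every such collection has a common vertex and is therefore contained in a vertex stabiliser $\langle r_{F_1},\dots,r_{F_n}\rangle\cong(\mathbb{Z}/2\mathbb{Z})^n$ for $v=F_1\cap\cdots\cap F_n$. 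Combining these: if $\gamma\neq 1$ has finite order, then up to conjugacy $\gamma=r_{F_{i_1}}\cdots r_{F_{i_m}}$ with $m\geq 1$ distinct indices drawn from the facets through a vertex, so $\lambda(\gamma)=\lambda(F_{i_1})+\cdots+\lambda(F_{i_m})$. Properness says $\lambda(F_1),\dots,\lambda(F_n)$ are linearly independent over $\mathbb{Z}/2\mathbb{Z}$, so this sum is nonzero and $\gamma\notin\ker\lambda$. Hence $\ker\lambda$ has no nontrivial torsion.

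It then remains to assemble the conclusions. Freeness and proper discontinuity give the hyperbolic manifold structure on $\mathcal{M}_\lambda$ as above. For compactness and the covering degree I would note that surjectivity of $\lambda$ onto $V$ yields $[\Gamma(\mathcal{P}):\ker\lambda]=|\mathrm{im}\,\lambda|=|V|=2^{\text{rk}\,\lambda}$, a finite index; a finite-index subgroup of a cocompact lattice is itself cocompact, so $\mathcal{M}_\lambda$ is closed. Finally, passing to quotients of $\mathbb{H}^n$ turns the inclusion $\ker\lambda<\Gamma(\mathcal{P})$ into the orbifold covering $\mathcal{M}_\lambda=\mathbb{H}^n/\ker\lambda\to\mathbb{H}^n/\Gamma(\mathcal{P})=\mathcal{P}$, whose degree is exactly this index $|V|=2^{\text{rk}\,\lambda}$. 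The only subtlety worth double-checking is the reduction to vertex stabilisers in the torsion argument, which relies on the flag/Helly property that pairwise-intersecting facets of a compact right-angled polytope share a common vertex.
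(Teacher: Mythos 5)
The paper offers no proof of this proposition at all: it is quoted directly from Davis--Januszkiewicz \cite{DJ91}, so there is nothing internal to compare against. Your argument is a correct, self-contained proof and is essentially the standard one. The reduction of the whole statement to torsion-freeness of $\ker\lambda$, the observation that $\lambda$ descends to a group homomorphism because $V$ has exponent $2$, the conjugation-invariance of $\lambda$ (so it suffices to test representatives of conjugacy classes of torsion elements), the computation $\lambda(r_{F_{i_1}}\cdots r_{F_{i_m}})=\lambda(F_{i_1})+\cdots+\lambda(F_{i_m})\neq 0$ via properness, and the bookkeeping $[\Gamma(\mathcal{P}):\ker\lambda]=|V|=2^{\mathrm{rk}\,\lambda}$ (using the paper's standing surjectivity convention) are all sound.

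The one justification that is not quite right is the reduction to vertex stabilisers, which you correctly isolate as the delicate point but attribute to ``simplicity of $\mathcal{P}$''. Simplicity alone does not give the flag/Helly property. What does give it is right-angledness together with compactness: pairwise-orthogonal hyperplanes in $\mathbb{H}^n$ have identity (hence positive-definite) Gram matrix, so they meet in a hyperbolic subspace, and by Vinberg's results on acute-angled polytopes the corresponding facets then genuinely intersect inside $\mathcal{P}$; the resulting face of the compact polytope contains a vertex, whose $n$ facets carry linearly independent colours by properness. Alternatively, you can bypass both Tits' theorem on torsion in Coxeter groups and the flag property: a finite-order element of the discrete group $\Gamma(\mathcal{P})<\mathrm{Isom}(\mathbb{H}^n)$ generates a finite group, which fixes a point of $\mathbb{H}^n$ (the circumcentre of an orbit), hence is conjugate into the stabiliser of a point of the fundamental domain $\mathcal{P}$; by Poincar\'e's polyhedron theorem that stabiliser is generated by the reflections in the facets through that point, which all pass through a common vertex. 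With either repair your proof is complete.
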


\begin{rem}\label{orbifold definition}
$\mathcal{M}_\lambda$ is homeomorphic to the topological manifold $(\mathcal{P} \times \mathbb{Z}_2 ^m)/_{\sim}$, where
\begin{equation}\label{colouring 2}
    (p,g) \sim (q,h) \Leftrightarrow p=q \text{ and } g - h \in G_f 
\end{equation}
$f= F_{i_1} \cap ... \cap F_{i_l}$ being the unique face that contains $p$ as an interior point and $G_f$ is the subgroup of $\mathbb{Z}_2 ^k$ generated by $\lambda(F_{i_1})$,..., $\lambda(F_{i_l})$ (if $p\in \mathrm{int}(\mathcal{P})$, we put $G_p=\{0\})$.

More practically, this means that $(\mathcal{P} \times \mathbb{Z}_2 ^k)/_{\sim}$ can be obtained from $\mathcal{P} \times \mathbb{Z}_2 ^k$ by identifying distinct copies $F\times \{g\}$ and $F\times \{h\}$ through the identity on $F \in \mathcal{F}$ whenever $g-h =\lambda(F)$. It is clear then that $\mathcal{M}_\lambda$ can be tessellated by $2^k$ copies of $\mathcal{P}$.
\end{rem}

We say that two colourings $\lambda_1,\lambda_2:\mathcal{F}\to V$ are \textit{equivalent} if there is a symmetry $g\in \, \mathrm{Sym}(\mathcal{P})$, which acts on $\mathcal{F}$ as a permutation; and an isomorphism $\varphi \in GL(V)$ such that $\lambda_2=\varphi\circ\lambda_1\circ g$. Clearly, equivalent colourings produce isometric manifolds.

\cite[Lemma 2.4]{KMT} also gives us the following criterion for orientability:

\begin{prop}\label{orientability}
$\mathcal{M}_\lambda$ is orientable if and only if $\lambda$ is equivalent to a colouring that assigns to each facet a colour with an odd number of entries 1.
\end{prop}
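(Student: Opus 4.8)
The plan is to translate orientability of $\mathcal{M}_\lambda$ into a condition on the homomorphism $\lambda\colon\Gamma(\mathcal{P})\to V$, and then to recognise the ``odd number of entries $1$'' condition as a particular choice of linear functional that can be normalised using the $\GL(V)$ freedom built into the equivalence relation on colourings.

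First I would recall that, by Proposition~\ref{DJ}, $\mathcal{M}_\lambda=\H^n/_{\ker\lambda}$ with $\ker\lambda$ torsion-free and acting freely, and that $\H^n$ is orientable. A free, properly discontinuous quotient of an orientable manifold is orientable precisely when the group acts by orientation-preserving isometries, so $\mathcal{M}_\lambda$ is orientable if and only if every element of $\ker\lambda$ is orientation-preserving. Next, let $\epsilon\colon\Gamma(\mathcal{P})\to\Z/2\Z$ be the homomorphism sending each generating reflection $r_F$ to $1$. Since a reflection reverses orientation, an element $\gamma\in\Gamma(\mathcal{P})$ preserves orientation exactly when $\gamma\in\ker\epsilon$. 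Combining these two observations, $\mathcal{M}_\lambda$ is orientable if and only if $\ker\lambda\subseteq\ker\epsilon$.

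Now I would use that $\lambda\colon\Gamma(\mathcal{P})\to V$ is surjective (the colours generate $V$ by our standing assumption), together with the universal property of the quotient: $\ker\lambda\subseteq\ker\epsilon$ holds if and only if $\epsilon$ factors as $\epsilon=\phi\circ\lambda$ for some linear functional $\phi\colon V\to\Z/2\Z$. Writing $\phi(\cdot)=\langle w,\cdot\rangle$ for the unique representing vector $w\in V$, the factorisation condition $\phi(\lambda(F))=\epsilon(r_F)=1$ for every facet $F$ becomes the statement that there exists $w\in V$ with $\langle w,\lambda(F)\rangle=1$ for all $F\in\mathcal{F}$, and such a $w$ is necessarily nonzero. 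The ``odd number of entries $1$'' condition is exactly $\langle\mathbf{1},\lambda(F)\rangle=1$ with $\mathbf{1}=(1,\dots,1)$, i.e. the special case $w=\mathbf{1}$.

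Finally I would establish both implications. If $\lambda$ is equivalent to a colouring all of whose colours have odd weight, then $w=\mathbf{1}$ satisfies the criterion, so $\mathcal{M}_\lambda$ is orientable (recalling that equivalent colourings give isometric, hence simultaneously orientable, manifolds). Conversely, from an orientable $\mathcal{M}_\lambda$ I obtain a nonzero $w$ as above, and I would invoke that $\GL(V)\cong\GL_k(\Z/2\Z)$ acts transitively on nonzero vectors to pick $\varphi\in\GL(V)$ with $\varphi^{T}\mathbf{1}=w$; then $\lambda'=\varphi\circ\lambda$ is an equivalent colouring (taking the polytope symmetry to be the identity) with $\langle\mathbf{1},\lambda'(F)\rangle=\langle\varphi^{T}\mathbf{1},\lambda(F)\rangle=\langle w,\lambda(F)\rangle=1$ for every $F$, so all its colours have odd weight. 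I expect the main obstacle to be the careful bookkeeping of the middle step---correctly identifying the orientation character $\epsilon$ and justifying that orientability of the quotient is governed by the containment $\ker\lambda\subseteq\ker\epsilon$; once this is in place, the passage to the weight condition is a short piece of linear algebra over $\Z/2\Z$.
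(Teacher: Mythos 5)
Your argument is correct and complete. Note that the paper gives no proof of this proposition at all --- it is imported verbatim from \cite[Lemma 2.4]{KMT} --- and your route (orientability of $\mathbb{H}^n/_{\ker\lambda}$ $\Leftrightarrow$ $\ker\lambda\subseteq\ker\epsilon$ for the orientation character $\epsilon$ sending each reflection $r_F$ to $1$, then factoring $\epsilon=\phi\circ\lambda$ through the surjection $\lambda$ and normalising the representing vector $w$ of $\phi$ to $\mathbf{1}$ via the $\GL(V)$ freedom in the equivalence relation) is exactly the standard argument underlying that cited lemma.
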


\begin{ex}[$2n$-gons]\label{example}
The lowest-rank colouring $\lambda$ on a right-angled $2n$-gon $P$ is given by associating alternately the canonical vectors $e_1$, $e_2$ to its edges. Since the polygon has Euler characteristic $\chi(P)=1-\frac{n}{2}$, we have from Proposition \ref{DJ} that $\chi(\mathcal{M}_\lambda)=4-2n$, that is, $\mathcal{M}_\lambda$ is an orientable surface of genus $g=n-1$. It is not hard to see that this is the only orientable rank $2$ colouring for $P$ up to equivalence.
\end{ex}

\subsection{Induced Colourings and Geodesic Embeddings}

Now, a facet $F_0\in \mathcal{F}$ of a right-angled, compact polytope $\mathcal{P} \subset \mathbb{H}^n$ is also a right-angled, compact, convex polytope in the hypersurface $\mathcal{H}\cong \mathbb{H}^{n-1}$ such that $\mathcal{H}\cap \mathcal{P}=F_0$. A $V$-colouring $\lambda$ of $\mathcal{P}$ then naturally induces a $W$-colouring $\mu$ of $F_0$, with $W\subset V$ generated by $\{\lambda(F)\mid F \, \text{adjacent to} \, F_0\}$: it suffices to define $\mu$ by $\mu(F_0\cap F)=\lambda(F)$, where $F$ runs over $\mathcal{F}(F_0)=\{F \in \mathcal{F}\mid F \, \text{is adjacent to} \, F_0\}$.

We have the following:

\begin{prop}[\cite{DJ91},\cite{KS}]\label{induced colouring}
The manifold $\mathcal{M}_\mu$ is contained (in a non-canonical way) in $\mathcal{M}_\lambda$ as a totally geodesic hypersurface so that the cover $\pi:\mathcal{M}_\lambda \to \mathcal{P}$ restricts to a cover $\mathcal{M}_\mu \to F_0$. Moreover, the number of connected components of $\pi^{-1}(F_0)$ is equal to $2^{k-s-1}$, where $k$ and $s$ are the ranks of $\lambda$ and $\mu$, respectively. Each connected component is a copy of $\mathcal{M}_\mu$ geodesically embedded in $M_\lambda$.
\end{prop}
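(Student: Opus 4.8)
The plan is to work entirely at the level of the right-angled reflection group $\Gamma(\mathcal{P})$ and the surjection $\lambda\colon\Gamma(\mathcal{P})\to V$ induced by the colouring (sending the reflection $r_F$ in a facet $F$ to $\lambda(F)$), so that $\mathcal{M}_\lambda=\mathbb{H}^n/\ker\lambda$. First I would fix the totally geodesic hyperplane $\mathcal{H}\cong\mathbb{H}^{n-1}$ with $\mathcal{H}\cap\mathcal{P}=F_0$. Because $\mathcal{P}$ is right-angled, every facet $F\in\mathcal{F}(F_0)$ meets $\mathcal{H}$ orthogonally, so $r_F$ preserves $\mathcal{H}$ and restricts on it to the reflection in the wall $F_0\cap F$ of the $(n-1)$-polytope $F_0$; moreover each such $r_F$ commutes with $r_{F_0}$, which fixes $\mathcal{H}$ pointwise. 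These generate the setwise stabiliser, giving the splitting
\begin{equation*}
\Gamma_\mathcal{H}:=\mathrm{Stab}_{\Gamma(\mathcal{P})}(\mathcal{H})=\langle r_{F_0}\rangle\times\Gamma(F_0),
\end{equation*}
where $\Gamma(F_0)=\langle r_F\mid F\in\mathcal{F}(F_0)\rangle$ acts on $\mathcal{H}$ as the reflection group of $F_0$ and $r_{F_0}$ acts trivially. The image of $\mathcal{H}$ in $\mathcal{M}_\lambda$ is exactly $\pi^{-1}(F_0)$, and it is totally geodesic since $\mathcal{H}$ is and $\mathbb{H}^n\to\mathcal{M}_\lambda$ is a local isometry; embeddedness of each piece follows from the torsion-freeness of $\ker\lambda$ granted by Proposition \ref{DJ}.

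Next I would count components. The connected components of $\pi^{-1}(F_0)$ are in bijection with the $\ker\lambda$-orbits on the set $\Gamma(\mathcal{P})/\Gamma_\mathcal{H}$ of hyperplanes in the orbit $\Gamma(\mathcal{P})\cdot\mathcal{H}$, i.e.\ with the double cosets $\ker\lambda\backslash\Gamma(\mathcal{P})/\Gamma_\mathcal{H}$. Since $\ker\lambda$ is normal with quotient $V$, this set is $V/\lambda(\Gamma_\mathcal{H})$, where $\lambda(\Gamma_\mathcal{H})=W+\langle\lambda(F_0)\rangle$ and $W=\lambda(\Gamma(F_0))=\langle\lambda(F)\mid F\in\mathcal{F}(F_0)\rangle$ has rank $s$. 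The key point — and the step I expect to carry the real content — is that $\lambda(F_0)\notin W$: choosing a vertex $v=F_0\cap F_{i_1}\cap\cdots\cap F_{i_{n-1}}$ of $F_0$, properness of $\lambda$ makes $\lambda(F_0),\lambda(F_{i_1}),\dots,\lambda(F_{i_{n-1}})$ linearly independent, so $\lambda(F_0)$ lies outside the span of the $\lambda(F_{i_j})\in W$. Hence $\lambda(\Gamma_\mathcal{H})$ has rank $s+1$ and $|V/\lambda(\Gamma_\mathcal{H})|=2^{k}/2^{s+1}=2^{k-s-1}$.

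Finally I would identify each component with $\mathcal{M}_\mu$. A component is $\mathcal{H}/\mathrm{Stab}_{\ker\lambda}(\mathcal{H})$ with $\mathrm{Stab}_{\ker\lambda}(\mathcal{H})=\ker\lambda\cap\Gamma_\mathcal{H}$, and since $r_{F_0}$ acts trivially on $\mathcal{H}$ the relevant group is the image of this stabiliser in $\Gamma(F_0)$. Writing an element as $r_{F_0}^{\varepsilon}\delta$ with $\delta\in\Gamma(F_0)$ and $\varepsilon\in\{0,1\}$, the condition $\lambda(r_{F_0}^\varepsilon\delta)=\varepsilon\lambda(F_0)+\lambda(\delta)=0$ together with $\lambda(\delta)\in W$ and $\lambda(F_0)\notin W$ forces $\varepsilon=0$ and $\lambda(\delta)=0$; under the tautological identification $\mu(r_{F_0\cap F})=\lambda(F)$ this says exactly $\delta\in\ker\mu$. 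Thus the stabiliser's image is $\ker\mu$, the component is $\mathcal{H}/\ker\mu=\mathcal{M}_\mu$, and the restriction of $\pi$ is the covering $\mathcal{M}_\mu=\mathcal{H}/\ker\mu\to\mathcal{H}/\Gamma(F_0)=F_0$. The two subtle bookkeeping points to watch are the trivially-acting factor $\langle r_{F_0}\rangle$, which would otherwise threaten to produce a double cover of $\mathcal{M}_\mu$ rather than $\mathcal{M}_\mu$ itself, and the repeated use of $\lambda(F_0)\notin W$; both are controlled by properness, which is really the engine of the whole statement.
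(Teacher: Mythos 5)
The paper itself gives no proof of this proposition --- it is quoted from \cite{DJ91} and \cite{KS} --- so I am judging your argument on its own terms. Your overall strategy (the splitting $\mathrm{Stab}_{\Gamma(\mathcal{P})}(\mathcal{H})=\langle r_{F_0}\rangle\times\Gamma(F_0)$, the identification of components of $\pi^{-1}(F_0)$ with the double cosets $\ker\lambda\backslash\Gamma(\mathcal{P})/\Gamma_{\mathcal{H}}$, and the computation of the stabiliser of a single component) is the standard one and is essentially how the cited sources argue. However, the step you yourself single out as ``the engine of the whole statement,'' namely $\lambda(F_0)\notin W$, does not follow from the argument you give, and is in fact false for some proper colourings. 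Properness is a condition at one vertex at a time: at $v=F_0\cap F_{i_1}\cap\cdots\cap F_{i_{n-1}}$ it tells you that $\lambda(F_0)$ lies outside the span of $\lambda(F_{i_1}),\dots,\lambda(F_{i_{n-1}})$, but that span is in general a \emph{proper} subspace of $W=\langle\lambda(F)\mid F\in\mathcal{F}(F_0)\rangle$, which collects the colours of all facets adjacent to $F_0$, not only those through one vertex. Concretely, colour the edges of a compact right-angled hexagon cyclically by $e_1,\,e_2,\,e_1+e_2,\,e_1,\,e_2,\,e_1+e_2\in\mathbb{Z}_2^2$: this is proper and surjective, yet for the edge $F_0$ coloured $e_1$ the two adjacent edges carry $e_2$ and $e_1+e_2$, so $W=\mathbb{Z}_2^2\ni\lambda(F_0)$. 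Here $k=s=2$, the formula $2^{k-s-1}$ would predict half a component, and a direct check shows that $\pi^{-1}(F_0)$ is a single circle consisting of two copies of $F_0$, whereas $\mathcal{M}_\mu$ is a circle consisting of four copies; the component is a free $\mathbb{Z}_2$-quotient of $\mathcal{M}_\mu$, not a copy of it.

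The conclusion is that $\lambda(F_0)\notin W$ must be carried as a hypothesis of the proposition (it does hold in every application made in this paper: for the $2n$-gonal face of the L\"obell polytope the colour $e_1$, respectively $e_4$, is never in the span of the colours of the adjacent pentagonal faces), and your proof should state it as such rather than derive it from properness; when it fails, the correct count is $2^{k-\dim(W+\langle\lambda(F_0)\rangle)}$ and each component is only covered by $\mathcal{M}_\mu$. With that hypothesis in place the rest of your argument is sound: $\lambda(\Gamma_{\mathcal{H}})=W\oplus\langle\lambda(F_0)\rangle$ has rank $s+1$, the component count is $2^{k-s-1}$, and forcing $\varepsilon=0$ identifies the stabiliser's image with $\ker\mu$. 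Two smaller points deserve attention. First, embeddedness of each component does not follow from torsion-freeness of $\ker\lambda$ alone; you need that distinct walls in the orbit $\Gamma(\mathcal{P})\cdot\mathcal{H}$ are pairwise disjoint, which holds because two walls crossing at a codimension-$2$ face of the tessellation correspond to two \emph{distinct} facets of a tile and hence to non-conjugate reflections in the right-angled Coxeter group. Second, that same disjointness is what legitimises your bijection between connected components of the subset $\pi^{-1}(F_0)$ and $\ker\lambda$-orbits of walls, so it should be stated rather than assumed.
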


\begin{ex}[L\"obell polyhedra]\label{Lobell example}
The L\"obell polyhedron $R(2n)$, which admits a hyperbolic, right-angled realization for $n\ge 3$, has a lowest-rank proper $\mathbb{Z}_2^3$-colouring $\Lambda_g$ described in \cite[Proposition 3.6]{Vesnin}, which assigns $e_1$ to one $P_{2n}$ face, $e_2$, $e_3$ alternately to its adjacent pentagonal faces, $e_3$ to the other $P_{2n}$ face and $e_1$, $(e_1+e_2+e_3)$ alternately to its adjacent faces (see Figure \ref{Lobell colouring}). The colouring induced by $\Lambda$ in one of the $P_{2n}$ faces, then, is equivalent to the colouring $\lambda$ of  $P_{2n}$ described in Example \ref{example}. By Proposition \ref{induced colouring}, it follows that $\mathcal{M}_{\lambda}$ embeds geodesically in $\mathcal{M}_{\Lambda}$, which is a closed, hyperbolic $3$-manifold, and also orientable by Proposition \ref{orientability}). 
\end{ex}

\begin{figure}[ht]
\includegraphics[scale=0.4]{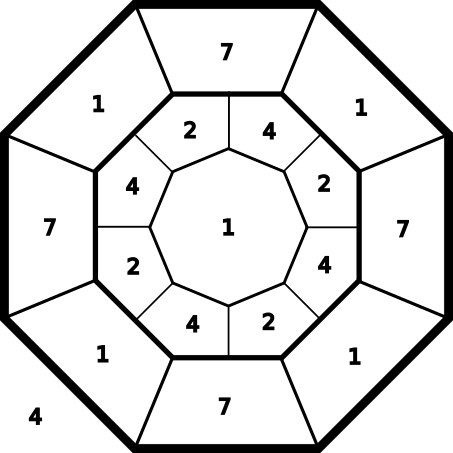}
\centering
\caption{Colouring of the polyhedron $R(8)$ as described in Example \ref{Lobell example}. Here the boundary of the polyhedron is seen in $\mathbb{R}^2 \cup \{\infty\}$ and the colours are represented in binary notation, associating the vector $(a,b,c)$ to $a+2b+4c$.}
\label{Lobell colouring}
\end{figure}

\subsection{Admissible symmetries and Coloured Isometries}

Now, given a (not necessarily proper) colouring $\lambda$ of a right angled polytope $\mathcal{P}$, there is a natural group of symmetries of the associated orbifold $\mathcal{M}_{\lambda}$ called the \textit{coloured isometry group} of $\mathcal{M}_{\lambda}$. We briefly recall its definition as given in \cite[Section 2.1]{KS}:

\begin{defn}\label{coloured symmetry group}
Let be a $V$--colouring of $\mathcal{P}$. A symmetry $g\in \text{Sym}(\mathcal{P})$, which acts on $\mathcal{F}$ as a permutation, is \textit{admissible with respect to $\lambda$} if:
\begin{enumerate}
    \item it induces a permutation of the colors assigned to the facets, by sending each color $\lambda(F)$ to $\lambda\big(g(F)\big)$;
    \item such permutation is realised by an invertible linear automorphism $\phi \in GL(V)$. 
    \end{enumerate}
    
Equivalently, $g$ is admissible with respect to $\lambda$ if $\exists \, \phi \in GL(V)$ such that $\phi^{-1}\circ \lambda \circ g = \lambda$.
\end{defn}

Admissible symmetries, which we denote by $\mathrm{Adm}_{\lambda}(\mathcal{P})$, are easily seen to form a subgroup of the symmetry group $\text{Sym}(\mathcal{P})$, and there is a naturally defined homomorphism $\Phi_{\lambda}:\mathrm{Adm}_{\lambda}(\mathcal{P})\to GL(V)$ such that $\Phi_\lambda(g)^{-1} \circ \lambda \circ g = \lambda$ for every $g\in \text{Adm}_\lambda (\mathcal{P})$.

The \textit{coloured isometry group}  $\mathrm{Isom}_c(M_{\lambda})$ is then defined as the group of symmetries of $\mathcal{M}_{\lambda}$ which are lifts of admissible symmetries of $\mathcal{P}$ under the orbifold cover $M_\lambda \to \mathcal{P}$. 

\begin{prop}[\cite{KS}, Section 2.1]\label{coloured isometry group}
For any $V$-colouring $\lambda$ of $\mathcal{P}$, we have that $\mathrm{Isom}_c(\mathcal{M}_{\lambda})\cong \mathrm{Adm}_{\lambda}(\mathcal{P})\ltimes V$. Moreover, the action of $\mathrm{Adm}_{\lambda}(\mathcal{P})$ on $V$ is precisely the one induced by the homomorphism $\Phi_{\lambda}$.
\end{prop}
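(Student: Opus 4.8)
The plan is to realise $\mathrm{Isom}_c(\mathcal{M}_\lambda)$ as an extension of $\mathrm{Adm}_{\lambda}(\mathcal{P})$ by the deck group of the orbifold cover $\pi\colon \mathcal{M}_\lambda \to \mathcal{P}$, and then to exhibit an explicit splitting induced by $\Phi_\lambda$. Throughout I would work in the tessellated model $\mathcal{M}_\lambda = (\mathcal{P}\times V)/_\sim$ of Remark~\ref{orbifold definition}, on which everything is transparent. By definition each $F\in \mathrm{Isom}_c(\mathcal{M}_\lambda)$ covers a unique admissible symmetry, so $F\mapsto q(F)$, the symmetry of $\mathcal{P}$ that $F$ descends to, defines a homomorphism $q\colon \mathrm{Isom}_c(\mathcal{M}_\lambda)\to \mathrm{Adm}_{\lambda}(\mathcal{P})$. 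The goal then reduces to showing that $q$ is surjective with kernel $V$, and that it admits a section realising the action $\Phi_\lambda$.

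For surjectivity I would construct, for each $g\in \mathrm{Adm}_{\lambda}(\mathcal{P})$ with $\phi=\Phi_\lambda(g)$, the candidate lift
\[
\tilde{g}\big([(p,w)]\big) = \big[(g(p),\phi(w))\big].
\]
The crucial step, and the one I expect to carry the weight of the argument, is the well-definedness of $\tilde{g}$. If $p$ lies in the interior of a face $f=F_{i_1}\cap\cdots\cap F_{i_l}$, then $g(p)$ lies in the interior of $g(f)=g(F_{i_1})\cap\cdots\cap g(F_{i_l})$, and $G_{g(f)}$ is generated by the colours $\lambda\big(g(F_{i_j})\big)$. The admissibility relation $\lambda\circ g = \phi\circ\lambda$ gives $\lambda\big(g(F_{i_j})\big)=\phi\big(\lambda(F_{i_j})\big)$, whence $G_{g(f)}=\phi(G_f)$; therefore $w-w'\in G_f$ forces $\phi(w)-\phi(w')=\phi(w-w')\in G_{g(f)}$, so the gluing relation of Remark~\ref{orbifold definition} is respected. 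Being assembled copy-by-copy from the isometry $g$ of $\mathcal{P}$ together with the permutation $\phi$ of the sheets, $\tilde{g}$ is an isometry of $\mathcal{M}_\lambda$ covering $g$, so $q(\tilde{g})=g$ and $q$ is onto.

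It remains to identify the kernel and the extension data. Any element of $\ker q$ covers the identity of $\mathcal{P}$, hence is a deck transformation of the regular orbifold cover $\pi$; since $\pi$ corresponds to $\ker\lambda\trianglelefteq\Gamma(\mathcal{P})$, its deck group is $\Gamma(\mathcal{P})/\ker\lambda\cong V$, realised as the translations $t_v\colon[(p,w)]\mapsto[(p,w+v)]$. This yields a short exact sequence $1\to V\to \mathrm{Isom}_c(\mathcal{M}_\lambda)\xrightarrow{\,q\,}\mathrm{Adm}_{\lambda}(\mathcal{P})\to 1$. The map $g\mapsto\tilde{g}$ is a homomorphic section, since $\tilde{g}_1\tilde{g}_2\big([(p,w)]\big)=\big[(g_1g_2p,\phi_1\phi_2 w)\big]$ coincides with $\widetilde{g_1g_2}$ because $\Phi_\lambda$ is a homomorphism; here I would also remark that $\Phi_\lambda$ is well-defined precisely because $\lambda$ is surjective, so $\phi$ is determined on all of $V$. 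This splits the sequence, giving $\mathrm{Isom}_c(\mathcal{M}_\lambda)\cong \mathrm{Adm}_{\lambda}(\mathcal{P})\ltimes V$. Finally, a direct computation recovers the action: $\tilde{g}^{-1}$ applies $\phi^{-1}$ to the sheet coordinate, $t_v$ adds $v$, and $\tilde{g}$ reapplies $\phi$, so that $\tilde{g}\,t_v\,\tilde{g}^{-1}=t_{\phi(v)}$. Hence $\mathrm{Adm}_{\lambda}(\mathcal{P})$ acts on $V$ exactly through $\Phi_\lambda$, as claimed.
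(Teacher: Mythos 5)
Your argument is correct. Note that the paper itself gives no proof of this proposition --- it is quoted from Kolpakov--Slavich \cite{KS}, Section 2.1 --- so there is nothing in the text to compare against; your write-up is essentially the standard argument and, as far as one can tell, the one intended by the cited source. The three points that carry the weight are all handled properly: the well-definedness of the lift $\tilde g$ via $G_{g(f)}=\Phi_\lambda(g)(G_f)$, the identification of $\ker q$ with the deck group $\Gamma(\mathcal{P})/\ker\lambda\cong V$ (which uses the paper's standing assumption that colourings are surjective, as does the well-definedness of $\Phi_\lambda$ itself), and the conjugation computation $\tilde g\, t_v\, \tilde g^{-1}=t_{\Phi_\lambda(g)(v)}$. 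The only step you pass over silently is that an element of $\mathrm{Sym}(\mathcal{P})$, a priori a combinatorial symmetry, is realised by an actual hyperbolic isometry of the right-angled polytope $\mathcal{P}$, so that $\tilde g$ is an isometry and not merely a homeomorphism; this follows from Andreev/Mostow rigidity for compact right-angled polytopes and is taken for granted throughout the paper as well, so it is a remark worth one line rather than a gap.
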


\begin{corollary}\label{orientation-preserving coloured isometries}
The orientation-preserving coloured isometry group $\mathrm{Isom}_c ^+(\mathcal{M}_{\lambda})$ is generated by the isometries $(r,e_i)$, where $r\in \mathrm{Adm}_\lambda(\mathcal{P})$ is orientation-reversing; $(\mathrm{id},e_i+e_j)$ with $i\neq j$; and $(s,0)$ for $s\in \mathrm{Adm}_\lambda(\mathcal{P})$ orientation-preserving.
\end{corollary}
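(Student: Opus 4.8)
The plan is to combine Proposition~\ref{coloured isometry group}, which gives the semidirect product decomposition $\mathrm{Isom}_c(\mathcal{M}_\lambda)\cong \mathrm{Adm}_\lambda(\mathcal{P})\ltimes V$, with a careful tracking of how orientation behaves under this decomposition. First I would fix the natural homomorphism $\det:\mathrm{Isom}_c(\mathcal{M}_\lambda)\to \{\pm 1\}$ recording whether an isometry preserves or reverses orientation; the orientation-preserving subgroup is by definition its kernel, so the task reduces to identifying generators of that kernel.

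The key observation is that an element $(g,w)\in \mathrm{Adm}_\lambda(\mathcal{P})\ltimes V$ acts on $\mathcal{M}_\lambda$ by first applying the lift of the symmetry $g$ of $\mathcal{P}$ and then translating by the colour-vector $w$. The translation part, coming from $V$, acts by relabelling the copies of $\mathcal{P}$ in the tessellation described in Remark~\ref{orbifold definition}; a single generator $e_i\in V$ swaps adjacent copies across the facets coloured $e_i$, and I expect this to be orientation-\emph{reversing}, exactly like a reflection composed with the identification. Consequently, a pure translation $(\mathrm{id},w)$ preserves orientation if and only if $w$ has an even number of nonzero entries, which is generated by the elements $(\mathrm{id},e_i+e_j)$ with $i\neq j$; this accounts for the middle family of generators. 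For the symmetry part, the lift of an orientation-preserving $g\in\mathrm{Adm}_\lambda(\mathcal{P})$ stays orientation-preserving, giving the generators $(s,0)$; while the lift of an orientation-reversing $r$ must be corrected by one odd translation to land in the kernel, giving the generators $(r,e_i)$.

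The main steps, in order, are: (i) verify that the sign of $(g,w)$ factors as the product of the sign of $g$ acting on $\mathcal{P}$ and the parity $(-1)^{|w|}$ of the translation, where $|w|$ counts nonzero entries; (ii) deduce that the kernel of $\det$ consists exactly of pairs $(g,w)$ with $\mathrm{sign}(g)=(-1)^{|w|}$; and (iii) exhibit the three listed families as a generating set for this kernel, which is a routine check once the sign formula is in hand, since the even-weight vectors $e_i+e_j$ generate the even-parity part of $V$, the elements $(s,0)$ cover the orientation-preserving symmetries, and each $(r,e_i)$ supplies an orientation-preserving lift of an orientation-reversing symmetry from which all others differ by kernel elements already accounted for.

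The hard part will be step~(i): establishing rigorously that a single colour-translation $(\mathrm{id},e_i)$ reverses orientation. This is geometrically plausible from the gluing picture of Remark~\ref{orbifold definition} — moving from the copy $\mathcal{P}\times\{g\}$ to $\mathcal{P}\times\{g+e_i\}$ is realised by the identity on the shared facet, which behaves like a mirror reflection in the ambient manifold — but making the orientation bookkeeping precise requires choosing compatible orientations on the copies of $\mathcal{P}$ and checking the transition across a facet flips the sign, in the same spirit as the characterization of orientability in Proposition~\ref{orientability}. Once this local sign computation is secured, the multiplicativity of $\det$ over the semidirect product and the generation claim follow by elementary group theory.
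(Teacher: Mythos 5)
Your proposal is correct and follows essentially the same route as the paper: both use the decomposition $\mathrm{Isom}_c(\mathcal{M}_\lambda)\cong \mathrm{Adm}_\lambda(\mathcal{P})\ltimes V$ and reduce everything to the fact that the pure colour-translations $(\mathrm{id},e_i)$ are orientation-reversing, after which the sign of $(g,w)$ is multiplicative and the three families visibly generate the kernel. The step you flag as the hard part is settled in the paper in one line: since $\ker\lambda\,\triangleleft\,\Gamma(\mathcal{P})$, the hyperplane reflection $r_F$ descends to the isometry $\big(\mathrm{id},\lambda(F)\big)$ of $\mathcal{M}_\lambda=\mathbb{H}^n/_{\ker\lambda}$, and reflections are orientation-reversing — which is exactly the rigorous version of your gluing-picture heuristic.
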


\begin{proof}
Since $\mathrm{ker} \, \lambda\, \triangleleft\, \Gamma$, any reflection $r_F \in \Gamma$ lifts to an isometry $\big(\mathrm{id},\lambda(F)\big)$ in $\mathcal{M}_\lambda \cong (\mathcal{P}\times V)/_\sim$, and it follows that $(\mathrm{id},v)\in \mathrm{Isom}_c (\mathcal{M}_\lambda)$ for all $v\in V$. 
Moreover, the isometries $(\mathrm{id},e_i)$ are always orientation-reversing, and we can conclude with the statement. 
\end{proof}

\begin{corollary}\label{fixed point-free isometries}
If the action of $\Phi_\lambda(\varphi)$ in $V$ is trivial, an isometry $(\varphi,w)$ is fixed point-free if and only if $w \notin \bigcup_{x \in \mathrm{Fix}(\varphi|_{\partial \mathcal{P}})} G_x$.
\end{corollary}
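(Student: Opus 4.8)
The plan is to realise the isometry $(\varphi,w)$ explicitly on the tessellated model $\mathcal{M}_\lambda \cong (\mathcal{P}\times V)/_\sim$ of Remark \ref{orbifold definition} and to read off its fixed points directly from the identification rule \eqref{colouring 2}. First I would pin down the action. By Proposition \ref{coloured isometry group} an element of $\mathrm{Isom}_c(\mathcal{M}_\lambda)$ is a pair $(\varphi,w)\in \mathrm{Adm}_\lambda(\mathcal{P})\ltimes V$, where the factor $V$ acts by the deck translations $(\mathrm{id},v)\colon [p,g]\mapsto [p,g+v]$ coming from the reflections (as in the proof of Corollary \ref{orientation-preserving coloured isometries}), while $\varphi$ acts by $[p,g]\mapsto [\varphi(p),\Phi_\lambda(\varphi)g]$. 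Composing, $(\varphi,w)$ sends $[p,g]$ to $[\varphi(p),\Phi_\lambda(\varphi)g+w]$, and I would first record that this is well defined: since $\varphi$ is admissible we have $\lambda\circ\varphi=\Phi_\lambda(\varphi)\circ\lambda$, hence $G_{\varphi(f)}=\Phi_\lambda(\varphi)(G_f)$ for every face $f$, which is exactly what is needed to respect \eqref{colouring 2}. Under the hypothesis $\Phi_\lambda(\varphi)=\mathrm{id}$ this collapses to the simple formula
\[
(\varphi,w)\colon [p,g]\longmapsto [\varphi(p),\,g+w].
\]

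Next I would characterise the fixed points. Because the first coordinate is a genuine point of $\mathcal{P}$ (the relation $\sim$ identifies only the labels lying in $V$), the equation $[\varphi(p),g+w]=[p,g]$ forces $\varphi(p)=p$, and then \eqref{colouring 2} demands $(g+w)-g=w\in G_f$, where $f$ is the unique face containing $p$ in its interior. The key observation is that this condition does not involve $g$ at all, so $(\varphi,w)$ has a fixed point if and only if there exists $p\in\mathrm{Fix}(\varphi)$ with $w\in G_p$, that is, if and only if $w\in\bigcup_{p\in\mathrm{Fix}(\varphi)}G_p$.

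Finally I would separate the interior and boundary contributions to this union. For $p\in\mathrm{int}(\mathcal{P})$ one has $G_p=\{0\}$ by the convention of Remark \ref{orbifold definition}, so interior fixed points impose no constraint once $w\neq 0$; the only genuine constraints arise from $p\in\mathrm{Fix}(\varphi|_{\partial\mathcal{P}})$. This yields the equivalence
\[
(\varphi,w)\ \text{is fixed point free}\iff w\notin\bigcup_{x\in\mathrm{Fix}(\varphi|_{\partial\mathcal{P}})}G_x,
\]
as claimed. I expect the only delicate point to be precisely this last piece of bookkeeping: for $w=0$ an interior fixed point of $\varphi$ already produces a fixed point of $(\varphi,0)$ while contributing only $\{0\}$ to the boundary union, so the criterion is genuinely applied (and is only needed) for the nontrivial translations $w\neq 0$ used to manufacture fixed-point-free involutions through Lemma \ref{lemma-embed-bound}; in that regime the formula is exact. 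Everything else reduces to the routine checks that the displayed action is well defined and isometric, which are already supplied by Proposition \ref{coloured isometry group}.
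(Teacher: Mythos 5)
Your proof is correct and follows essentially the same route as the paper's: both realise $(\varphi,w)$ on the model $(\mathcal{P}\times V)/_\sim$ of Remark \ref{orbifold definition}, observe that a fixed point forces $\varphi(p)=p$ together with $w\in G_p$, and then discard interior fixed points because $G_p=\{0\}$ there. Your explicit caveat about the edge case $w=0$ (when $\varphi$ fixes an interior point but no boundary point) corresponds to the extra condition ``$\Phi_\lambda(\varphi)(v)\neq w+v$ for every $v$'' that the paper's proof also imposes, so nothing is missing.
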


\begin{proof}
By Remark \ref{orbifold definition}, it is not hard to see that $(\varphi,w)\in \mathrm{Adm}_\lambda (\mathcal{P}) \ltimes V \cong \mathrm{Isom}_c (\mathcal{M}_\lambda)$ fixes a point $(x,v)\in (\mathcal{P}\times V)/_\sim \cong \mathcal{M}_\lambda$ if, and only if, $$(x,v)\sim(\varphi,w)\cdot (x,v)=(\varphi(x),\Phi_\lambda(\varphi)(v)+w),$$ that is, if $\varphi(x)=x$ and $\big(\Phi_\lambda(\varphi)(v)+w-v\big) \in G_x$. 
In particular, it is enough to check this condition only on the points $x \in \mathrm{Fix}(\varphi|_{\partial \mathcal{P}})$ and to guarantee that $\Phi_\lambda(\varphi)(v)\neq w+ v$ for every $v\in V$. We thus have the following:
\end{proof}

\begin{rem}\label{conformal automorphisms}
We note that, if $\mathcal{M}_\lambda$ is an orientable hyperbolic surface, the group of conformal automorphisms and orientation-preserving isometries coincide, and we have that $$\mathrm{Aut}(\mathcal{M}_\lambda) \cong \mathrm{Isom}^+(\mathcal{M}_\lambda) > \mathrm{Isom}_c^+(\mathcal{M}_\lambda),$$ where the last inclusion might be surjective. 
\end{rem}


\section{The Accola-Maclachlan surfaces}\label{AM}

Let $P_g$ be the right-angled $(2g+2)$-gon and $\lambda_g$ the unique lowest-rank colouring of $P_g$. By Example \ref{example}, $\mathcal{M}_{\lambda_g}$ a surface of genus $g$. We have the following:

\begin{prop}\label{isomorphic AM group}
The group $\text{Isom}_c ^+ (\mathcal{M}_{\lambda_g})$ of orientation-preserving coloured isometries of $\mathcal{M}_{\lambda_g}$ is isomorphic to the group $AM_g$ from Equation \ref{AM group}.
\end{prop}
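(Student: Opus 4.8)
The plan is to work through the isomorphism $\mathrm{Isom}_c(\mathcal{M}_{\lambda_g}) \cong \mathrm{Adm}_{\lambda_g}(P_g) \ltimes V$ of Proposition \ref{coloured isometry group}, here with $V = \mathbb{Z}_2^2$, and to exhibit explicit elements of $\mathrm{Isom}_c^+(\mathcal{M}_{\lambda_g})$ realising the presentation \eqref{AM group} of $AM_g$. First I would pin down the admissible symmetries and the action $\Phi_{\lambda_g}$. Then I would write down candidate generators $a,b$, check that they satisfy the defining relations of $AM_g$, verify that they generate the whole orientation-preserving coloured isometry group, and conclude by a comparison of orders.

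To compute $\Phi_{\lambda_g}$, recall that $\mathrm{Sym}(P_g)$ is the dihedral group $D_{2g+2}$ and that $\lambda_g$ assigns $e_1$ to the odd-indexed edges and $e_2$ to the even-indexed ones. Writing $\rho$ for the rotation by one step and examining the two conjugacy classes of reflections, I would check that $\rho$ and the reflections whose axes pass through opposite vertices reverse the parity of the edge labels, hence swap the two colours, whereas $\rho^2$ and the reflections through opposite edge midpoints preserve parity and fix the colours. In particular every symmetry is admissible, so $\mathrm{Adm}_{\lambda_g}(P_g) = D_{2g+2}$, and since $\{e_1,e_2\}$ is a basis of $V$ the homomorphism $\Phi_{\lambda_g}$ takes values in the order-two subgroup $\{\mathrm{id}, \tau\} < GL(V)$, where $\tau$ is the transposition $e_1 \leftrightarrow e_2$. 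Both of these fix $e_1 + e_2$, so the isometry $t := (\mathrm{id}, e_1+e_2)$ is central in $\mathrm{Isom}_c(\mathcal{M}_{\lambda_g})$; by Corollary \ref{orientation-preserving coloured isometries} it is moreover orientation-preserving of order two.

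Next I would set $a := (\rho, 0)$ and $b := (\sigma, e_1)$, where $\sigma$ is a colour-swapping (vertex) reflection. Since the $V$-component of $a$ is zero, $a$ has order $2g+2$; and since $\sigma$ is a reflection while $(\mathrm{id}, e_1)$ is orientation-reversing, $b$ is orientation-preserving by Corollary \ref{orientation-preserving coloured isometries}. A short computation in the semidirect product gives $b^2 = (\mathrm{id}, \tau(e_1) + e_1) = (\mathrm{id}, e_1 + e_2) = t$, so $b$ has order $4$ and $b^2 = t$ is central, yielding $b^4 = 1$ and $[a, b^2] = 1$ at once. Using $\Phi_{\lambda_g}(\rho) = \tau$ and that $\rho\sigma$ is again a reflection, one finds $ab = (\rho\sigma, e_2)$ with $(\rho\sigma)^2 = \mathrm{id}$ and $\Phi_{\lambda_g}(\rho\sigma) = \mathrm{id}$, whence $(ab)^2 = 1$. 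Thus $a,b$ satisfy all the relations of \eqref{AM group} and define a homomorphism $\psi : AM_g \to \mathrm{Isom}_c^+(\mathcal{M}_{\lambda_g})$.

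It then remains to see that $\psi$ is an isomorphism. For surjectivity I would check that every generator of $\mathrm{Isom}_c^+(\mathcal{M}_{\lambda_g})$ listed in Corollary \ref{orientation-preserving coloured isometries} lies in $\langle a,b\rangle$: the rotations are the powers $a^k$; the translation $(\mathrm{id}, e_1+e_2)$ is $b^2$; and for any reflection $r$ the elements $a^k b$ and $a^k b \cdot b^2$ realise both $(r, e_1)$ and $(r, e_2)$, since $r$ runs over all reflections as $\rho^k\sigma$. For injectivity I would compare orders: the subgroup $\langle a, b^2\rangle \cong \mathbb{Z}_{2g+2} \times \mathbb{Z}_2$ has order $4g+4$ and does not contain $b$ (whose symmetry part $\sigma$ is a reflection, while $\langle a, b^2\rangle$ projects onto rotations only), so $|\mathrm{Isom}_c^+(\mathcal{M}_{\lambda_g})| \ge 8g + 8 = |AM_g|$; combined with the surjectivity of $\psi$ this forces $\psi$ to be an isomorphism. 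The main obstacle is the parity bookkeeping that distinguishes the two classes of reflections and thereby pins down $\Phi_{\lambda_g}$, together with keeping careful track of orientations through Corollary \ref{orientation-preserving coloured isometries}; once $\Phi_{\lambda_g}$ is correctly identified, the relations and the order count are routine.
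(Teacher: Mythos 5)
Your proposal is correct and follows essentially the same route as the paper: identify $\mathrm{Isom}_c(\mathcal{M}_{\lambda_g})\cong D_{2g+2}\ltimes\mathbb{Z}_2^2$ with $\Phi_{\lambda_g}$ landing in $\{\mathrm{id},\tau\}$, take $a$ a rotation with trivial colour part and $b$ a colour-swapping reflection paired with $e_1$ (the paper's $a=(r_1r_2,0)$, $b=(r_2,e_1)$), verify the $AM_g$ relations, and conclude by an order count. Your treatment of surjectivity and injectivity is somewhat more explicit than the paper's terse ``same generators and relations'' conclusion, but the argument is the same.
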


\begin{proof}
Every symmetry of $P_g$ is admissible with respect to $\lambda_g$, so we have that $\mathrm{Adm}_{\lambda_g}(P_g)= \mathrm{Sym}(P_g) \cong D_{2g+2}$, the dihedral group of order $4g+4$. By Proposition \ref{coloured isometry group}, we have that $$ \text{Isom}_c (\mathcal{M}_{\lambda_g}) \cong D_{2g+2} \ltimes \mathbb{Z}_2 ^2,$$ a group of order $16g+16$. The group $D_{2g+2}$ is generated by two reflections $r_1$, $r_2$, where $r_1$ fixes some edge of $P_g$ and $r_2$ fixes one vertex of that edge (see Figure \ref{Kulkarni figures}--left for an example). These reflections act on $\mathbb{Z}_2 ^2$ respectively as the identity and as the permutation of coordinates. On the other hand, $\mathbb{Z}_2 ^2$ is generated by the canonical vectors $e_1$, $e_2$.

Let $\text{id}$ be the identity element in $D_{2g+2}$ and $0$ in $\mathbb{Z}_2 ^2$. Let also the product in these groups be given by $\cdot$ (which will be omitted) and $+$, respectively; while the product $\cdot$ in $D_{2g+2} \ltimes \mathbb{Z}_2 ^2$ is the one given by Proposition \ref{coloured isometry group} and the identity element will be noted by $1$. Put $$x=(\text{id},e_1)\cdot (r_1,0) = (r_1,e_1),\, y=(r_2,e_1),\, z=(r_1,e_2),\, w=(r_2,e_2).$$ We have that $x^2=1$, $xy=(r_1 r_2,0)$ and $y^2=(\text{id},e_1+e_2)$. Moreover, it is not hard to see that $y^2 x = z$ and $y^3=w$. We also note that, since the action of $r_1 r_2$ fixes $e_1+e_2$, it follows that $xy=(r_1 r_2,0)$ and $y^2=(\text{id},e_1+e_2)=y^{-2}$ commute. Similarly, since the action of $r_1$ on $\mathbb{Z}_2 ^2$ is trivial, $x=(r_1,e_1)=x^{-1}$ and $y^2$ commute as well, or, equivalently, $(xy^2)^2=1$.

Let $a=xy$, $b=y$. By the remarks above and Corollary \ref{orientation-preserving coloured isometries}, these generate $\text{Isom}_c ^+ (\mathcal{M}_{\lambda_g})$. Since $r_1 r_2$ is the rotation of $P_g$ by an angle of $\frac{2\pi}{2g+2}$, we also have that $a^{2g+2}=1$. Moreover, $b^4=(y^2)^2=1$, $[a,b^2]=(ab)^2=1$. Thus, $\mathrm{Isom}_c ^+(\mathcal{M}_{\lambda_g})$ is a group of order $8g+8$ with the same generators and relations of the Accola-Maclachlan group $AM_g$, and it follows that these two groups are isomorphic.
\end{proof}





We now can prove the following:

\begin{theorem}\label{is AM}
The manifold $\mathcal{M}_{\lambda_g}$ is isometric to the Accola-Maclachlan surface $X_g$.
\end{theorem}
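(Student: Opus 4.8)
The plan is to combine the automorphism-group characterization of the Accola-Maclachlan surfaces recalled in the introduction with the computation of $\mathrm{Isom}_c^+(\mathcal{M}_{\lambda_g})$ just carried out in Proposition \ref{isomorphic AM group}. Recall that the introduction established that $X_g$ is, up to the listed exceptional genera, \emph{the} surface whose full conformal automorphism group contains a copy of $AM_g$ acting with signature $(0;2,4,2g+2)$, and indeed that this property characterizes $X_g$ even among the exceptional genera. So the strategy is to show that $\mathcal{M}_{\lambda_g}$ carries such an $AM_g$-action with the correct signature, and then invoke this uniqueness.

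First I would note that $\mathcal{M}_{\lambda_g}$ is, by Example \ref{example}, an orientable hyperbolic surface of genus $g$, so by Remark \ref{conformal automorphisms} its conformal automorphism group is exactly $\mathrm{Isom}^+(\mathcal{M}_{\lambda_g})$, which contains the coloured subgroup $\mathrm{Isom}_c^+(\mathcal{M}_{\lambda_g})$. By Proposition \ref{isomorphic AM group}, this latter group is isomorphic to $AM_g$. Hence $\mathrm{Aut}(\mathcal{M}_{\lambda_g})$ contains a copy of $AM_g$, which is precisely the first half of the characterizing data.

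Next I would verify the signature. The quotient of $\mathcal{M}_{\lambda_g}$ by the $AM_g$-action should be computed directly from the orbifold picture $\mathcal{M}_{\lambda_g}\cong(P_g\times\mathbb{Z}_2^2)/_\sim$. Since $\mathrm{Adm}_{\lambda_g}(P_g)=\mathrm{Sym}(P_g)\cong D_{2g+2}$ and the full coloured group $\mathrm{Isom}_c(\mathcal{M}_{\lambda_g})\cong D_{2g+2}\ltimes\mathbb{Z}_2^2$ projects onto the reflection orbifold $P_g=\mathbb{H}^2/\Gamma(P_g)$, the quotient orbifold $\mathcal{M}_{\lambda_g}/\mathrm{Isom}_c^+(\mathcal{M}_{\lambda_g})$ is the quotient of the right-angled $(2g+2)$-gon by its full symmetry group $D_{2g+2}$. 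That quotient is a triangle orbifold, and the cone points come from the vertex of the polygon (order $2$, the right angle), the center of an edge fixed by a reflection, and the center of the polygon (the rotation of order $2g+2$); this yields exactly the signature $(0;2,4,2g+2)$. I would present this as a short orbifold Euler-characteristic bookkeeping check, cross-verified against $|AM_g|=8g+8$ and the Riemann-Hurwitz formula, so that the cone-point orders are pinned down unambiguously.

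The main obstacle is the case analysis against the exceptional genera and the $g=3$ subtlety. For $g\not\equiv 3\ (\mathrm{mod}\ 4)$ and outside the finite exceptional list, uniqueness of the surface with an order $8g+8$ automorphism group is immediate; but the introduction already argued, via the signature refinement of Kulkarni and the maximality/extendability results of Singerman and \cite{BCC}, that the $(0;2,4,2g+2)$-signature pins down the surface in \emph{every} genus, including the exceptional ones and the $g\equiv 3\ (\mathrm{mod}\ 4)$ case where $K_g$ also exists (since $K_g$ is distinguished from $AM_g$ and our group is isomorphic to $AM_g$, not $K_g$). Thus, having matched both the group $AM_g$ and the signature $(0;2,4,2g+2)$, I would conclude that $\mathcal{M}_{\lambda_g}$ must be conformally, hence isometrically, the Accola-Maclachlan surface $X_g$. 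The delicate point to state carefully is that matching the abstract isomorphism type of the group together with the signature of the quotient is exactly the characterizing package assembled in the introduction, so no further genus-by-genus verification is needed.
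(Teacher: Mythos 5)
Your proposal is correct and follows essentially the same route as the paper: identify $\mathrm{Isom}_c^+(\mathcal{M}_{\lambda_g})\cong AM_g$ inside $\mathrm{Aut}(\mathcal{M}_{\lambda_g})$, check that the quotient orbifold has signature $(0;2,4,2g+2)$ (the paper sees it as two barycentric triangles of $P_g$ glued along their sides), and conclude by non-extendability of the action plus Kulkarni's uniqueness of the surface with that group and signature. The only cosmetic slip is that the order-$4$ cone point lies over the polygon vertex and the order-$2$ one over the edge midpoint, not the other way around; since the signature is an unordered multiset, the argument is unaffected.
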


\begin{proof}
By Proposition \ref{isomorphic AM group}, we have that $\text{Isom}_c ^+ (\mathcal{M}_{\lambda_g})\cong AM_g$. 
It is also not hard to see that the quotient $\mathcal{M}_{\lambda_g}/_{\text{Isom}_c ^+ (\mathcal{M}_{\lambda_g})}$ is obtained by the identification of two triangles from the first barycentric subdivision of $P_g$ along their sides, which gives us an orbifold with signature $(0;2,4,2g+2)$. Since $\text{Aut}(\mathcal{M}_{\lambda_g})>\text{Isom}_c ^+ (\mathcal{M}_{\lambda_g})\cong AM_g$ by Remark \ref{conformal automorphisms}, it follows from the non-extendability of the $AM_g$ group in signature $(0;2,4,2g+2)$ that $\text{Aut}(\mathcal{M}_{\lambda_g})\cong AM_g$. By the uniqueness of the conformal automorphism group of the Accola-Maclachlan surface on that signature, we have that $\mathcal{M}_{\lambda_g}\cong X_g$.
\end{proof}

\begin{corollary}\label{AM embeds}
The Accola-Maclachlan surface $X_g$ embeds geodesically for every genus $g$.
\end{corollary}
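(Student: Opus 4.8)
The plan is to combine the identification $X_g \cong \mathcal{M}_{\lambda_g}$ from Theorem \ref{is AM} with the explicit geodesic embedding of $\mathcal{M}_{\lambda_g}$ already exhibited in Example \ref{Lobell example}, so that the corollary follows with essentially no further work. First I would recall that $\lambda_g$ is the alternating lowest-rank colouring of the right-angled $(2g+2)$-gon $P_g$ of Example \ref{example}, so that $P_g = P_{2n}$ with $n = g+1$; since $g \ge 2$ we have $n \ge 3$, which is exactly the range in which the L\"obell polyhedron $R(2n) = R(2g+2)$ admits a compact right-angled hyperbolic realization.

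Next I would invoke the $\mathbb{Z}_2^3$-colouring $\Lambda$ of $R(2g+2)$ from Example \ref{Lobell example}, whose colouring induced on one of the two $(2g+2)$-gon facets is equivalent to $\lambda_g$: each such facet is adjacent to $2g+2$ pentagons carrying the colours $e_2, e_3$ alternately, and after the change of basis $e_2 \mapsto e_1$, $e_3 \mapsto e_2$ this is precisely the alternating colouring of Example \ref{example}. Because equivalent colourings yield isometric manifolds, Proposition \ref{induced colouring} applies verbatim: the surface $\mathcal{M}_{\lambda_g}$ sits inside the closed hyperbolic $3$-manifold $\mathcal{M}_\Lambda$ as a totally geodesic hypersurface (here the ranks are $k=3$ and $s=2$, so the preimage $\pi^{-1}(F_0)$ has $2^{k-s-1}=1$ component, a single embedded copy), and $\mathcal{M}_\Lambda$ is orientable by Proposition \ref{orientability}. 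Composing this embedding with the isometry $X_g \cong \mathcal{M}_{\lambda_g}$ of Theorem \ref{is AM} then exhibits $X_g$ as a totally geodesic codimension-$1$ submanifold of $\mathcal{M}_\Lambda$, i.e. $X_g$ embeds geodesically, and this works uniformly for every $g \ge 2$.

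Since all of the genuinely new content --- the identification of the face-coloured surface with the Accola-Maclachlan surface, and the compatibility of the L\"obell colouring with the alternating face colouring --- is already in place, I do not expect any real obstacle here. The only points to check are bookkeeping ones: that the index shift $2n = 2g+2$ lines up the number of edges of $P_g$ with the number of pentagons adjacent to a $(2g+2)$-gon facet of $R(2g+2)$, and that the \emph{equivalence} (rather than literal equality) of the induced colouring with $\lambda_g$ is harmless, which it is by the remark that equivalent colourings give isometric manifolds. I would stress that this corollary produces only a geodesic embedding; promoting it to a geometric bounding --- the content of Theorem \ref{main} --- requires the separating/involution machinery of Lemma \ref{lemma-embed-bound} and is deliberately left to the subsequent construction.
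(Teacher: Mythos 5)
Your proposal is correct and follows exactly the paper's own argument: take the L\"obell polyhedron $R(2g+2)$ with the colouring of Example \ref{Lobell example}, observe that the induced colouring on a $(2g+2)$-gonal facet is (equivalent to) $\lambda_g$, and apply Proposition \ref{induced colouring} together with Theorem \ref{is AM}. The extra bookkeeping you supply (the range $n\ge 3$, the equivalence of colourings, the component count $2^{k-s-1}=1$) is accurate and only makes explicit what the paper leaves implicit.
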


\begin{proof}
Take the right-angled L\"obell polyhedron $R(2g+2)$ with colouring $\Lambda_g$ as described in Example \ref{Lobell example}. We then have that the induced colouring in one $P_g$ face is precisely $\lambda_g$. It follows that $X_g\cong \mathcal{M}_{\lambda_g}$ embeds geodesically in $\mathcal{M}_{\Lambda_g}$.
\end{proof}

\begin{corollary}
The Accola-Maclachlan surface $X_g$ bounds geometrically an orientable manifold for every genus $g$.
\end{corollary}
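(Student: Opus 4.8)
The plan is to promote the geodesic embedding of Corollary~\ref{AM embeds} to a geometric bounding by means of Lemma~\ref{lemma-embed-bound}. Since $X_g\cong\mathcal{M}_{\lambda_g}$ already embeds geodesically, it suffices to exhibit a fixed-point free, \emph{orientation-reversing} involution $\varphi\in\mathrm{Isom}(X_g)$: the lemma then produces a compact hyperbolic $3$-manifold with totally geodesic boundary $X_g$, and because $\varphi$ reverses orientation this manifold is orientable. The search can be confined to coloured isometries. Indeed, Proposition~\ref{isomorphic AM group} and Theorem~\ref{is AM} give $\mathrm{Isom}^+(X_g)=\mathrm{Isom}_c^+(\mathcal{M}_{\lambda_g})\cong AM_g$ of order $8g+8$; since $\mathrm{Isom}_c(\mathcal{M}_{\lambda_g})\cong D_{2g+2}\ltimes\mathbb{Z}_2^2$ already has order $16g+16=2\,|\mathrm{Isom}^+(X_g)|$ and embeds in $\mathrm{Isom}(X_g)$, we conclude $\mathrm{Isom}(X_g)=\mathrm{Isom}_c(\mathcal{M}_{\lambda_g})$. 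Every involution is therefore of the form $(\psi,w)$ and can be tested with Corollary~\ref{fixed point-free isometries}.

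For $g$ odd I would take the half-turn $\rho=(r_1r_2)^{g+1}\in D_{2g+2}$, the rotation of $P_g$ by $\pi$, which fixes no point of $\partial P_g$. Since $g+1$ is even, $\rho$ maps each edge to an edge of the same parity, hence preserves the two colours of $\lambda_g$ and $\Phi_{\lambda_g}(\rho)=\mathrm{id}$. Then $(\rho,e_1)$ is an involution (as $\Phi_{\lambda_g}(\rho)(e_1)=e_1$); it is orientation-reversing by Corollary~\ref{orientation-preserving coloured isometries}, since $(\rho,e_1)=(\rho,0)\cdot(\mathrm{id},e_1)$ with $(\rho,0)$ orientation-preserving and $(\mathrm{id},e_1)$ orientation-reversing; and it is fixed-point free by Corollary~\ref{fixed point-free isometries}, because $\mathrm{Fix}(\rho|_{\partial P_g})=\emptyset$ while the only interior fixed point, the centre, has $G_{x}=\{0\}$, which the choice $e_1\neq 0$ avoids. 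Lemma~\ref{lemma-embed-bound} then gives the orientable geometric bounding for every odd $g$.

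The case $g$ even is the main obstacle. Now $g+1$ is odd, so $\rho$ interchanges edges of opposite parity and swaps the colours $e_1\leftrightarrow e_2$; the only translations making $(\rho,w)$ an involution are $w\in\{0,e_1+e_2\}$, and for each the fixing condition of Corollary~\ref{fixed point-free isometries} is solvable over the centre (one has $w=v+\Phi_{\lambda_g}(\rho)(v)$ for a suitable $v$), so $(\rho,w)$ has a fixed point. Running through the remaining involutions of $D_{2g+2}\ltimes\mathbb{Z}_2^2$ confirms the difficulty: lifts of the identity all fix some facet; edge-reflections again swap colours for $g$ even and keep a fixed point; and vertex-reflections fix a vertex, at which $G_x=\mathbb{Z}_2^2$ absorbs every translation. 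Thus $X_g$ admits \emph{no} fixed-point free involution when $g$ is even, and Lemma~\ref{lemma-embed-bound} is simply unavailable.

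To reach the statement for $g$ even I would therefore change strategy and realise $X_g$ as a \emph{separating} totally geodesic surface, so that bounding reduces to a cut with no folding. The L\"obell polyhedron $R(2g+2)$ carries two disjoint copies of $X_g$ inside $\mathcal{M}_{\Lambda_g}$, namely the connected surfaces $\pi^{-1}(F_0)$ and $\pi^{-1}(F_1)$ induced over its two $(2g+2)$-gonal facets (each connected by Proposition~\ref{induced colouring}), and the admissible symmetry interchanging $F_0$ and $F_1$ — whose colour permutation $e_1\mapsto e_3$, $e_3\mapsto e_1$, $e_2\mapsto e_1+e_2+e_3$ is an involution of $\mathbb{Z}_2^3$ — swaps these two copies. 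The plan is to choose a lift $(\tau,w)$ of this symmetry that is a fixed-point free, orientation-preserving involution of the closed orientable manifold $\mathcal{M}_{\Lambda_g}$; by Lemma~\ref{quotient lemma} the quotient $\mathcal{M}_{\Lambda_g}/\langle(\tau,w)\rangle$ is then a closed orientable hyperbolic $3$-manifold containing a single totally geodesic copy of $X_g$, and cutting along it gives the orientable bounding. The step I expect to be hardest is exactly the one the parity obstruction forces: verifying that a suitable $w$ makes $(\tau,w)$ fixed-point free, by controlling its fixed locus over $\mathrm{Fix}(\tau)\cap R(2g+2)$ through Corollary~\ref{fixed point-free isometries}, and then checking that the image of $X_g$ separates the quotient, so that the cut produces a connected boundary isometric to $X_g$ rather than two copies.
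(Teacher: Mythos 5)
Your reduction of the problem to exhibiting a fixed-point free, orientation-reversing involution of $X_g\cong\mathcal{M}_{\lambda_g}$, and your involution $(\rho,e_1)$ with $\rho=(r_1r_2)^{g+1}$ for odd $g$, are both correct and in the spirit of the paper's argument. The genuine gap is your claim that no such involution exists when $g$ is even: it rests on the assertion that edge-reflections swap the two colours for even $g$, which is false. An edge-reflection $r_1$ of $P_g$ sends the edge $i+k$ to the edge $i-k$, which has the same parity as $i+k$ for every $k$, so $r_1$ preserves the alternating colouring and $\Phi_{\lambda_g}(r_1)=\mathrm{id}$ for \emph{every} $g$ (this is also recorded in the proof of Proposition \ref{isomorphic AM group}: $r_1$ acts trivially on $\mathbb{Z}_2^2$, while it is the vertex-reflection $r_2$ that permutes the coordinates). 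Consequently $(r_1,e_1+e_2)$ is an involution, it is orientation-reversing (one reflection composed with two colour-translations), and it is fixed-point free by Corollary \ref{fixed point-free isometries}: the only boundary fixed points of $r_1$ are the midpoints of two opposite edges, whose isotropy groups are the rank-one subspaces $[e_1]$ and $[e_2]$, neither of which contains $e_1+e_2$. This is exactly the involution the paper uses, uniformly in $g$, and together with Lemma \ref{lemma-embed-bound} and Corollary \ref{AM embeds} it finishes the proof for all genera.

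Because of this, your entire alternative strategy for even $g$ is unnecessary; and, as written, it is not a proof. The two decisive steps there --- that some lift $(\tau,w)$ of the face-swapping symmetry of $R(2g+2)$ is a fixed-point free involution, and that the resulting copy of $X_g$ separates the quotient manifold --- are only announced, not verified, and the separation claim in particular would require a genuine argument. If you simply include the lifts of the edge-reflection $r_1$ in your case analysis for even $g$, you recover the paper's one-line proof.
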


\begin{proof}
Take the 
orientation-reversing involution $(r_1,e_1+e_2) \in \mathrm{Isom}_c(\mathcal{M}_{\lambda_g})$, composed of the orientation-reversing reflection $r_1$ and the orientation-reversing translations $e_1,e_2$. Since 
$\Phi_{\lambda_g}(r_1)$
acts trivially in $\mathbb{Z}_2 ^2$ 
and $ \mathrm{Fix}(r_1)\cap (\partial \mathcal{P}) =\{x_1,x_2\}$ with $G_{x_1}=G_{x_2}=[e_1]$, it follows from Corollary \ref{fixed point-free isometries} that $(r_1,e_1+e_2)$ is fixed point-free. 

We then have that $X_g\cong \mathcal{M}_{\lambda_g}$ has a fixed point-free, orientation-reversing involution for every $g$.\footnote{This result was also obtained by \cite[Section 2, p. 640]{BBCGG}.}
By Lemma \ref{lemma-embed-bound} and Corollary \ref{AM embeds}, we can conclude that $X_g$ bounds an orientable, compact manifold for every genus.
\end{proof}


\section{The Wiman type II surfaces}\label{Wiman}

Now, let $P_g '$ be the $4g$-gon and $\lambda_g '$ its unique lowest-rank colouring. From Theorem \ref{is AM}, we have that $$\text{Aut}(\mathcal{M}_{\lambda_g'}) \cong \mathrm{Isom}_c ^+(\mathcal{M}_{\lambda_g'})\cong \langle a,b \mid a^{4g}=b^4=(ab)^2=[a,b^2]=1\rangle,$$
where $a=(r_1 r_2,0)$ and $b=(r_2,e_1)$ in $\mathrm{Isom}_c (\mathcal{M}_{\lambda_g'})\cong D_{4g} \ltimes \mathbb{Z}^2 _2$. For every $g\ge 2$, take $c_g=a^{2g}b^2=\big( (r_1 r_2)^{2g},e_1+e_2\big) \in \text{Isom}_c ^+ (\mathcal{M}_{\lambda_g'})$. We have the following:

\begin{theorem}\label{is Wiman}
The manifold $\mathcal{M}_{\lambda_g'}/_{\langle c_g\rangle}$ is isometric to the Wiman type II surface $Z_g$.
\end{theorem}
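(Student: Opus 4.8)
The strategy is to follow exactly the template established by the proof of Theorem \ref{is AM}: identify the automorphism group of the candidate surface, show the quotient by that group has the correct signature, and then invoke the uniqueness characterization of the Wiman type II surface on that signature. The first task is to compute $\text{Isom}_c^+(\mathcal{M}_{\lambda_g'}/_{\langle c_g\rangle})$ and show it contains a cyclic group of order $4g$ acting with signature $(0;2,4g,4g)$, after which the classification result of Nakagawa \cite[p.~439]{Nakagawa} will finish the identification.

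First I would study the involution $c_g = a^{2g}b^2 = \big((r_1 r_2)^{2g}, e_1+e_2\big)$ and verify it acts freely on $\mathcal{M}_{\lambda_g'}$, so that by Lemma \ref{quotient lemma} the quotient $\mathcal{M}_{\lambda_g'}/_{\langle c_g\rangle}$ is again a closed orientable hyperbolic surface. Since $(r_1 r_2)^{2g}$ is the rotation of $P_g'$ by $\pi$ (a half-turn of the $4g$-gon) and $c_g$ includes the translation by $e_1 + e_2$, I would apply Corollary \ref{fixed point-free isometries}: as $\Phi_{\lambda_g'}((r_1r_2)^{2g})$ acts on $\mathbb{Z}_2^2$, I need to check the action is trivial and that $e_1 + e_2$ avoids the relevant subgroups $G_x$ for $x \in \mathrm{Fix}$; granting freeness, the quotient has Euler characteristic doubling the genus appropriately, and a short computation of $\chi$ confirms the genus of the quotient.

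Next, Lemma \ref{quotient lemma} gives $N_{\mathrm{Isom}^+(\mathcal{M}_{\lambda_g'})}(\langle c_g\rangle)/_{\langle c_g\rangle} < \mathrm{Isom}^+\big(\mathcal{M}_{\lambda_g'}/_{\langle c_g\rangle}\big)$. Because $c_g$ is built from the central rotation and the central translation $e_1+e_2$, I expect $\langle c_g\rangle$ to be central (or at least to have large normalizer) in $\mathrm{Isom}_c^+ \cong AM_{2g}$, so that most of the order-$8g$ coloured isometry group descends to the quotient. Quotienting the order-$8g$ group by the order-$2$ subgroup $\langle c_g\rangle$ yields a group of order $4g$; I would then exhibit the image of $a=(r_1r_2,0)$ as a generator of a cyclic subgroup of order $4g$ in the quotient isometry group and identify the quotient orbifold $\big(\mathcal{M}_{\lambda_g'}/_{\langle c_g\rangle}\big)/_{\langle \bar a\rangle}$ as having signature $(0;2,4g,4g)$, again via the barycentric-subdivision description of the fundamental domain used in Theorem \ref{is AM}.

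The main obstacle will be the careful verification that $c_g$ is fixed-point free together with pinning down the exact signature $(0;2,4g,4g)$ of the quotient orbifold rather than some other signature: the rotation $(r_1r_2)^{2g}$ is a nontrivial symmetry of $P_g'$, so the identification it induces on the $4g$-gon must be tracked explicitly to locate the cone points and their orders. Once the cyclic order-$4g$ action with the right signature is established, Nakagawa's characterization \cite[p.~439]{Nakagawa}, asserting that $Z_g$ is the unique surface carrying a cyclic conformal automorphism subgroup of order $4g$ with signature $(0;2,4g,4g)$, identifies $\mathcal{M}_{\lambda_g'}/_{\langle c_g\rangle}$ with $Z_g$ and completes the proof.
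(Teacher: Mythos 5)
Your proposal follows essentially the same route as the paper: verify via Corollary \ref{fixed point-free isometries} that $c_g$ acts freely, compute the genus of the quotient, use Lemma \ref{quotient lemma} to descend the coloured isometry group (in which $c_g$ is central), exhibit the image of $a$ as an order-$4g$ conformal automorphism, and conclude by Nakagawa's uniqueness --- the paper only needs your signature-$(0;2,4g,4g)$ computation to settle the exceptional case $g=3$, since for $g\neq 3$ the mere existence of an order-$4g$ automorphism already characterizes $Z_g$. One minor numerical slip: $\mathrm{Isom}_c^+(\mathcal{M}_{\lambda_g'})\cong AM_{2g-1}$ has order $16g$ (not $8g$), so its quotient by $\langle c_g\rangle$ has order $8g$ (not $4g$); this does not affect the argument, since the relevant point is only that the image $\bar a$ retains order $4g$ in the quotient.
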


\begin{proof}
We have that $c_g^2=1$. Now, $(r_1 r_2)^{2g}$ is precisely the antipodal map in $P_g'$, which only fixes the interior point $0\in P_g'$. Since the action of $(r_1 r_2)^{2g}$ in $\mathbb{Z}_2 ^2$ is trivial, it follows from Corollary \ref{fixed point-free isometries} that $c_g$ is fixed point-free isometry of $\mathcal{M}_{\lambda_g'}$.

Therefore, the orbifold $S_g\cong \mathcal{M}_{\lambda_g'}/_{\langle c_g\rangle}$ is an orientable, connected, closed manifold and it has Euler characteristic $\frac{4(1-g)}{2}=2-2g$, that is, is a surface of genus $g$. Also, because both $a$ and $b$ commute with $c_g$, we have that the normalizer of $c_g$ in $\mathrm{Aut}(\mathcal{M}_{\lambda_g'})$ is precisely the whole group. By Remark \ref{conformal automorphisms} and Lemma \ref{quotient lemma}, we have that $$\mathrm{Aut}\big(\mathcal{M}_{\lambda_g'}/_{\langle c_g \rangle}\big) > N_{\mathrm{Aut}(\mathcal{M}_{\lambda_g '})}(c_g)/_{\langle c_g \rangle} \cong \langle a,b\mid a^{4g}=b^4=(ab)^2=1,b^2=a^{2g}\rangle,$$
and it follows that $a$ descends into a conformal automorphism of $S_g$ of order $4g$. We thus conclude that $S_g$ is isometric to the Wiman type II surface for $g\neq 3$. 

We also note that $S_g/_{\langle a \rangle} \cong \mathcal{M}_{\lambda_g'}/_{\langle a, c_g\rangle}= \mathcal{M}_{\lambda_g'}/_{\langle a, b^2\rangle}$, and it is not hard to see from Remark \ref{orbifold definition} that this is an orbifold with signature $(0;2,4g,4g)$. It then follows that $S_g$ is also isometric to the Wiman type II surface for $g=3$.
\end{proof}

\begin{rem}\label{Wiman automorphism}
We note that $$\langle a,b\mid a^{4g}=b^4=(ab)^2=1,b^2=a^{2g}\rangle \cong \langle a,b' \mid a^{4g}=(b')^2=1,(b')^{-1}a(b')=a^{2g-1}\rangle,$$ with $b'=ab$. By \cite[Section 5.2]{Weaver}, this group is precisely $\mathrm{Aut}(Z_g)$, except in $g=2$. It follows that $\mathrm{Aut}(Z_g)\cong \mathrm{Isom}_c ^+(\mathcal{M}_{\lambda_g '})/_{\langle c_g \rangle}$ for every $g>2$.
\end{rem}

\begin{corollary}\label{Wiman embeds}
The Wiman type II surface $Z_g$ embeds geodesically for every genus $g$.    
\end{corollary}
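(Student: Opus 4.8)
The plan is to imitate Corollary \ref{AM embeds}, but to account for the fact that, by Theorem \ref{is Wiman}, $Z_g$ is not $\mathcal{M}_{\lambda_g'}$ itself but the quotient $\mathcal{M}_{\lambda_g'}/_{\langle c_g\rangle}$. First I would embed the genus $(2g-1)$ surface $\mathcal{M}_{\lambda_g'}$ geodesically exactly as before: colouring the L\"obell polyhedron $R(4g)$ by $\Lambda_g'$ as in Example \ref{Lobell example}, the induced colouring on a chosen $4g$-gon face $F_0$ is precisely $\lambda_g'$, so $\mathcal{M}_{\lambda_g'}$ embeds in the closed orientable hyperbolic $3$-manifold $\mathcal{M}_{\Lambda_g'}$. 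Since $\Lambda_g'$ has rank $3$ and $\lambda_g'$ has rank $2$, Proposition \ref{induced colouring} gives $2^{3-2-1}=1$ component, so $\pi^{-1}(F_0)=\mathcal{M}_{\lambda_g'}$ is connected; this connectedness is what will let any symmetry fixing $F_0$ preserve the embedded surface.

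Next I would promote the involution $c_g$ to the $3$-manifold. Let $\rho$ be the rotation of $R(4g)$ by $\pi$ about the axis joining the centres of the two $4g$-gon faces; it fixes each $4g$-gon setwise and restricts on $F_0$ to the antipodal map $(r_1r_2)^{2g}$. A half-turn shifts the $2n=4g$ pentagons adjacent to each $4g$-gon by $2g$ positions; since the alternating colours $e_2,e_3$ (respectively $e_1,\,e_1+e_2+e_3$) have period $2$ and $2g$ is even, $\rho$ fixes every colour, so it is admissible with $\Phi_{\Lambda_g'}(\rho)=\mathrm{id}$. I would then set $\hat{c}_g=(\rho,\,e_2+e_3)\in\mathrm{Isom}_c(\mathcal{M}_{\Lambda_g'})$. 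Because $\Phi_{\Lambda_g'}(\rho)=\mathrm{id}$ and $\rho^2=\mathrm{id}$, this is an involution, and its restriction to $\mathcal{M}_{\lambda_g'}=\pi^{-1}(F_0)$ is $\big((r_1r_2)^{2g},\,e_2+e_3\big)$, which under the identification $\langle e_2,e_3\rangle\cong\mathbb{Z}_2^2$ supplied by the induced colouring is exactly $c_g$.

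It then remains to verify that $\hat{c}_g$ is a fixed-point-free, orientation-preserving involution. As $\Phi_{\Lambda_g'}(\rho)=\mathrm{id}$, Corollary \ref{fixed point-free isometries} applies: the only boundary fixed points of $\rho$ are the two face centres, which lie in the interiors of the $4g$-gons coloured $e_1$ and $e_3$, so the relevant subgroups are $\langle e_1\rangle$ and $\langle e_3\rangle$; since $e_2+e_3\notin\langle e_1\rangle\cup\langle e_3\rangle$, the involution is fixed-point free. Orientation-preservation follows from Corollary \ref{orientation-preserving coloured isometries}, as $\rho$ is orientation-preserving on $\mathbb{H}^3$ and $e_2+e_3$ has even weight. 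By Lemma \ref{quotient lemma}, $\mathcal{M}_{\Lambda_g'}/_{\langle \hat{c}_g\rangle}$ is a closed orientable hyperbolic $3$-manifold, and since $\hat{c}_g$ preserves the totally geodesic surface $\mathcal{M}_{\lambda_g'}$ and restricts to $c_g$, this surface descends to an embedded totally geodesic copy of $Z_g\cong\mathcal{M}_{\lambda_g'}/_{\langle c_g\rangle}$.

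The main obstacle is the middle step: checking that the antipodal symmetry of the face genuinely extends to a colour-fixing admissible symmetry $\rho$ of the whole L\"obell polyhedron, and pinning down the translation vector so that $\hat{c}_g$ simultaneously restricts to $c_g$ and avoids every $G_x$ over the fixed locus. The parity observation, that a half-turn shifts the pentagon rings by an even number of steps and hence preserves the period-$2$ colouring, is what forces $\Phi_{\Lambda_g'}(\rho)$ to be trivial and lets me invoke Corollary \ref{fixed point-free isometries} directly; without it one would be forced to analyse the fixed points of a colour-permuting isometry by hand.
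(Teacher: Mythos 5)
Your proposal is correct and follows essentially the same route as the paper: the same colouring $\Lambda_g'$ of $R(4g)$, the same half-turn about the axis through the $4g$-gonal face centres, and the same lift $(\rho,e_2+e_3)$ of $c_g$, verified fixed-point free via Corollary \ref{fixed point-free isometries} with $G_{x_i}=\langle e_1\rangle,\langle e_3\rangle$. The only additions are harmless elaborations (the connectedness count $2^{3-2-1}=1$ and the explicit identification $\langle e_2,e_3\rangle\cong\mathbb{Z}_2^2$) of points the paper leaves implicit.
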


\begin{proof}
Take the L\"obell polytope $R(4g)$ as described in Example \ref{Lobell example}. This polytope has symmetry group $D_{4g} \rtimes \mathbb{Z}_2$, where $D_{4g}$ is the symmetry group of the $4g$-gonal faces. Take also the colouring $\Lambda_g '$ described in Example \ref{Lobell example}. We have that the rotation $r$ of angle $\pi$ around the axis that crosses the center of the $4g$-gonal faces preserves each colour, and thus is an admissible symmetry for $\Lambda_g'$ which acts trivially in $\mathbb{Z}_2 ^3$. Moreover, it restricts to the symmetry $(r_1 r_2)^{2g}$ in each $4g$-gonal face.

Take, then, the isometry $S=(r,e_2+e_3)\in \mathrm{Isom}_c ^+(\mathcal{M}_{\Lambda_g '})$, which is composed of the orientation-preserving rotation $r$ and two orientation-reversing translations $e_2$ and $e_3$. We have that $S^2=\mathrm{Id}$. Moreover, the fixed points of $r$ are the rotation axis in $R(4g)$, and we have that $\mathrm{Fix}(r)\cap (\partial \mathcal{P})=\{x_1,x_2\}$, where $x_1,x_2$ lie in the center of the $4g$-gonal faces coloured with $e_1, e_3$ respectively. Since $G_{x_1}=\{0,e_1\}$ and  $G_{x_1}=\{0,e_3\}$, it follows from Corollary \ref{fixed point-free isometries} that $S$ is a fixed point-free, orientation-preserving isometry of $\mathcal{M}_{\Lambda_g '}$.

Now, by Example \ref{Lobell example}, we have that $\big(P_g ' \times \{0\} \times \mathbb{Z}_2 ^2\big)/_{\sim_{\lambda_g '}}$ is an embedded copy of $\mathcal{M}_{\lambda_g '}$ in $\mathcal{M}_{\Lambda_g '} \cong \big(R(4g)\times \mathbb{Z}_2 ^3\big)/_{\sim_{\Lambda_g '}}$, obtained from the induced colouring on the $4g$-gonal face coloured with $e_1$. Moreover, it is not hard to see that the isometry $S$, restricted to $\big(P_g ' \times \{0\} \times \mathbb{Z}_2 ^2\big)/_\sim$, acts precisely as $c_g$ on this copy of $\mathcal{M}_{\lambda_g '}$. It follows from Theorem \ref{is Wiman} that $Z_g\cong \mathcal{M}_{\lambda_g '}/_{\langle c_g \rangle}$ embeds geodesically in the compact, orientable, hyperbolic $3$-manifold $\mathcal{M}_{\Lambda_g'}/_{\langle S \rangle}$.
\end{proof}

We still need one result:

\begin{corollary}\label{Wiman involutions}
For $g\ge 3$, the Wiman type II surface $Z_g$ admits an orientation-reversion, fixed point-free involution if, and only if, $g$ is odd.
\end{corollary}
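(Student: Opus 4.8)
The plan is to reduce everything to an explicit computation inside the coloured isometry group $\mathrm{Isom}_c(\mathcal{M}_{\lambda_g'}) \cong D_{4g} \ltimes \mathbb{Z}_2^2$, using that $c_g = \big((r_1r_2)^{2g}, e_1+e_2\big)$ is central: the $\pi$-rotation $(r_1r_2)^{2g}$ is central in $D_{4g}$ and $e_1+e_2$ is fixed by the coordinate-swap action of $D_{4g}$. First I would show that \emph{every} isometry of $Z_g$ is coloured. By Remark \ref{conformal automorphisms} and Remark \ref{Wiman automorphism}, $\mathrm{Isom}^+(Z_g) = \mathrm{Aut}(Z_g)$ has order $8g$, while Lemma \ref{quotient lemma} embeds $\mathrm{Isom}_c^+(\mathcal{M}_{\lambda_g'})/\langle c_g\rangle$ (also of order $8g$) into it; hence the two coincide. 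Composing any orientation-reversing isometry with the coloured one induced by $(\mathrm{id}, e_1)$ then gives $\mathrm{Isom}(Z_g) = \mathrm{Isom}_c(\mathcal{M}_{\lambda_g'})/\langle c_g\rangle$. Consequently an orientation-reversing involution of $Z_g$ is the image $\bar\psi$ of an orientation-reversing $\psi = (\varphi, w)$ with $\psi^2 \in \langle c_g\rangle$, and $\bar\psi$ is fixed point-free if and only if both $\psi$ and $c_g\psi$ are fixed point-free on $\mathcal{M}_{\lambda_g'}$.

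Writing $\sigma(\varphi) = \Phi_{\lambda_g'}(\varphi) \in \{\mathrm{id}, \mathrm{swap}\}$, Corollary \ref{orientation-preserving coloured isometries} says $(\varphi, w)$ is orientation-reversing exactly when $\varphi$ is a reflection and $w \in \{0, e_1+e_2\}$, or $\varphi$ is a rotation and $w \in \{e_1, e_2\}$. Combining $(\varphi, w)^2 = \big(\varphi^2, w + \sigma(\varphi)w\big)$ with the fixed-point criterion from the proof of Corollary \ref{fixed point-free isometries} — that $(\varphi, w)$ fixes $(x,v)$ iff $\varphi(x)=x$ and $\sigma(\varphi)v + v + w \in G_x$ — the candidates split into reflections (type R) and rotations (type T). The decisive computation is that for a quarter-turn $\varphi = (r_1r_2)^{g}$ one has $\varphi^2 = (r_1r_2)^{2g}$ and $\sigma(\varphi) = \mathrm{swap}^g$, so $\psi^2 = \big((r_1r_2)^{2g}, (\mathrm{id}+\mathrm{swap}^g)w\big)$ equals $c_g$ precisely when $g$ is odd, and lies outside $\langle c_g\rangle$ when $g$ is even.

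For the forward direction, with $g$ odd I take $\psi = \big((r_1r_2)^{g}, e_1\big)$. It is orientation-reversing and satisfies $\psi^2 = c_g$, so $\bar\psi$ is a genuine involution (as $\psi \notin \langle c_g\rangle$) that is orientation-reversing since $c_g$ is orientation-preserving. The rotation $(r_1r_2)^g$ fixes only the centre, an interior point with $G_x = \{0\}$, and $\sigma(\varphi)v + v \in \{0, e_1+e_2\}$ never equals $e_1$; hence $\psi$ is fixed point-free, and the same computation for $c_g\psi = \big((r_1r_2)^{3g}, e_2\big)$ shows it is too. Thus $\bar\psi$ is the desired involution. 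For the converse, with $g$ even the only rotation candidates forced to be involutions in the quotient are $\varphi \in \{\mathrm{id}, (r_1r_2)^{2g}\}$ (the quarter-turns being excluded by the parity computation), and these acquire fixed points directly or after twisting, e.g. $c_g\cdot\big((r_1r_2)^{2g}, e_i\big) = (\mathrm{id}, e_{3-i})$ is a lift of a reflection. Among reflections, a vertex reflection fixes its two vertices, where $G_x = \mathbb{Z}_2^2$, so $\psi$ already has fixed points; an edge reflection with $w=0$ fixes the two edge midpoints; and an edge reflection with $w = e_1+e_2$, though itself fixed point-free, has $c_g\psi = (\varphi', 0)$ with $\varphi'$ again an edge reflection (since composing with the $\pi$-rotation preserves reflection type), which fixes points. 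Hence for $g$ even no orientation-reversing involution of $Z_g$ is fixed point-free.

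I expect the converse to be the main obstacle, as it requires both the structural input that all isometries of $Z_g$ are coloured — so that this finite enumeration is exhaustive — and the case-by-case verification that each orientation-reversing involution has a boundary fixed point either outright or after twisting by $c_g$. The parity phenomenon $\mathrm{swap}^g = \mathrm{id}$ versus $\mathrm{swap}$ is what cleanly separates the two parities and lies at the heart of the argument.
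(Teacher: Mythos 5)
Your proposal is correct and follows essentially the same route as the paper: identify $\mathrm{Isom}(Z_g)$ with $\mathrm{Isom}_c(\mathcal{M}_{\lambda_g'})/\langle c_g\rangle$ via Remark \ref{Wiman automorphism} and Lemma \ref{quotient lemma}, then enumerate lifts $(s,v)$ with square in $\langle c_g\rangle$, checking orientation and freeness of both $\psi$ and $c_g\psi$, with the dichotomy coming from whether $\Phi_{\lambda_g'}\big((r_1r_2)^{g}\big)$ swaps the coordinates ($g$ odd) or acts trivially ($g$ even). Your treatment is if anything slightly more explicit than the paper's (you name the witness $\big((r_1r_2)^g,e_1\big)$ and handle the $s=\mathrm{id}$ and $s=(r_1r_2)^{2g}$ rotation cases separately), but the substance is identical.
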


\begin{proof}
Since each of the isometries $(r_1,0), (r_2,0), (\mathrm{id},e_1), (\mathrm{id},e_2)\in \mathrm{Isom}_c(\mathcal{M}_{\lambda_g '})$ commute with $c_g$, we have that $$\mathrm{Isom}\big(\mathcal{M}_{\lambda_g '}/_{\langle c_g \rangle}\big) > \mathrm{Isom}_c\big(\mathcal{M}_{\lambda_g '}/_{\langle c_g \rangle}\big) > N_{\mathrm{Isom}_c(\mathcal{M}_{\lambda_g '})}(c_g)/_{\langle c_g \rangle} = \mathrm{Isom}_c(\mathcal{M}_{\lambda_g '})/_{\langle c_g \rangle},$$ and $\mathrm{Isom}_c(\mathcal{M}_{\lambda_g '})/_{\langle c_g \rangle}$ contains $\mathrm{Isom}_c ^+(\mathcal{M}_{\lambda_g '})/_{\langle c_g \rangle}$ as an index--$2$ subgroup. However, by Remark \ref{Wiman automorphism}, we have that $\mathrm{Isom}^+\big(\mathcal{M}_{\lambda_g '}/_{\langle c_g \rangle}\big)\cong \mathrm{Isom}_c ^+(\mathcal{M}_{\lambda_g '})/_{\langle c_g \rangle}$ for $g\ge 3$, from which it follows that $\mathrm{Isom}\big(\mathcal{M}_{\lambda_g '}/_{\langle c_g \rangle}\big) \cong \mathrm{Isom}_c(\mathcal{M}_{\lambda_g '})/_{\langle c_g \rangle}$ since the groups have the same order.

Now, take any isometry $\varphi =(s,v) \in \mathrm{Sym(\mathcal{P})}\ltimes V \cong \mathrm{Isom}_c(\mathcal{M}_{\lambda_g '})$, which descents into an isometry $\tilde{\varphi}\in \mathrm{Isom}_c\big(\mathcal{M}_{\lambda_g '}/_{\langle c_g \rangle}\big)$. We have the following:
\begin{enumerate}
    \item $\tilde{\varphi}$ is orientation-reversing if $\varphi$ is orientation-reversing;
    \item $\tilde{\varphi}$ is an involution if $\varphi^2 \in \langle c_g \rangle$;
    \item $\tilde{\varphi}$ fixed point-free if $\langle \varphi, c_g \rangle$ acts freely on $\mathcal{M}_{\lambda_g '}$. 
\end{enumerate}
If $\varphi^2=1$, we must have that $s^2=1$ and $v \in \mathrm{Fix}\big(\Phi_{\lambda_g '}(s)\big)$. Up to conjugation, we have then that $s$ must be either $r_1$, $r_2$ or $(r_1r_2)^{2g}$. If $s=r_1$, we have that $\mathrm{Fix}\big(\Phi_{\lambda_g '}(s)\big)=\mathbb{Z}_2 ^2$ and $\mathrm{Fix}(s)\cap (\partial \mathcal{P})=\{x_1,x_2\} $ with $G_{x_1}=G_{x_2}=[e_1]$. In this case, by Corollary \ref{fixed point-free isometries}, $(s,v)$ is fixed point-free only if $v\in\{e_2,e_1+e_2\}$. Since we want $\varphi=(r,s)$ to be orientation-reversing, we must then have $v=e_1+e_2$, from which it follows that $\varphi$ is a fixed point-free orientation-reversing involution. But then we would have that $\varphi c_g=(r_1 (r_1r_2)^{2g},0)$ is not fixed point-free. The other two cases $s=r_2$, $s=(r_1 r_2)^{2g}$ are excluded analogously.

Suppose then that $\varphi^2=c_g=\big((r_1r_2)^{2g},e_1+e_2\big)$. It follows, in this case, that $s^2=(r_1r_2)^{2g}$ and $v+\Phi_{\lambda_g '}(s)(v)=e_1+e_2$, which implies that $v \notin \text{Fix}\big(\Phi_{\lambda_g '}(s)\big)$. Since $s\in D_{4g}$, we must have that $s=(r_1r_2)^{\pm g}$. On the other hand, the possible actions of $\Phi_{\lambda_g '}(s)$ on $\mathbb{Z}_2 ^2$ are either trivial or permuting the coordinates, so we must have $v\in \{e_1,e_2\}$ and $\Phi_{\lambda_g '}(s)$ permuting the coordinates. For $s=(r_1r_2)^{\pm g}$, this is true if, and only if, $g$ is odd. In this case, we have that $\varphi$ is orientation-reversing and $\langle \varphi, c_g\rangle=\langle \varphi \rangle$ with both $\varphi$, $\varphi^3=\varphi^{-1}$ acting freely, and it follows that $\tilde{\varphi}$ is an orientation-reversing, fixed point-free involution of $\mathcal{M}_{\lambda_g '}/_{\langle c_g \rangle} \cong Z_g$.
%
\end{proof}


We can now finally prove this:

\begin{corollary}
The Wiman type II surface $Y_g$ bounds geometrically an orientable, closed manifold for every odd genus $g$.
\end{corollary}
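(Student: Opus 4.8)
The plan is to assemble the two preceding corollaries through Lemma~\ref{lemma-embed-bound}. Since a closed hyperbolic surface has genus at least $2$, every odd genus satisfies $g \geq 3$, so the hypotheses of Corollary~\ref{Wiman involutions} hold in all the cases at hand. (Note that the statement refers to the Wiman type II surface, which is denoted $Z_g$ throughout this section.)

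First I would invoke Corollary~\ref{Wiman embeds} to obtain that $Z_g \cong \mathcal{M}_{\lambda_g '}/_{\langle c_g \rangle}$ embeds geodesically, namely into the compact, orientable, hyperbolic $3$-manifold $\mathcal{M}_{\Lambda_g '}/_{\langle S \rangle}$. Next, for odd $g \geq 3$, I would use Corollary~\ref{Wiman involutions} to produce an orientation-reversing, fixed point-free involution of $Z_g$. These are exactly the two hypotheses required by Lemma~\ref{lemma-embed-bound}, so applying that lemma yields that $Z_g$ bounds geometrically.

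To pin down the orientability of the bounding manifold, I would appeal to the final assertion in the proof of Lemma~\ref{lemma-embed-bound}, which states that the manifold $M'_\varphi$ obtained by self-identification is orientable precisely when the involution $\varphi$ is orientation-reversing. As the involution supplied by Corollary~\ref{Wiman involutions} is orientation-reversing, the resulting compact bounding $3$-manifold $N_g$ is orientable, which completes the argument.

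I expect essentially no obstacle at this stage: the genuine content lies in Corollaries~\ref{Wiman embeds} and~\ref{Wiman involutions}, and here the argument is a direct assembly requiring no further computation. The only point worth stressing is that it is the orientation-reversing nature of the involution---available only for odd $g$---that simultaneously accounts for the parity hypothesis and for the orientability of $N_g$.
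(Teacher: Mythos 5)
Your proposal is correct and matches the paper's proof, which is exactly the one-line assembly of Lemma~\ref{lemma-embed-bound} with Corollaries~\ref{Wiman embeds} and~\ref{Wiman involutions}; your added remarks (that odd genus forces $g\ge 3$ so Corollary~\ref{Wiman involutions} applies, and that orientability of the bounding manifold comes from the involution being orientation-reversing) are accurate elaborations of the same argument.
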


\begin{proof}
This follows directly from Lemma \ref{lemma-embed-bound} and Corollaries \ref{Wiman embeds}, \ref{Wiman involutions}.
\end{proof}


\section{The Kulkarni Surfaces}\label{Kulkarni}

Now, let $g \equiv 3 \,(\text{mod} \, 4)$ and $P_g$ be the $(2g+2)$-gon. Let $\lambda_g ''$ be the colouring on $P_g$ given by Figure \ref{Kulkarni figures}--left, where each side is labelled with the vectors $e_1$, $e_2$, $e_3$ and $(e_1+e_2+e_3)$ in clockwise order. Since each adjacent side has distinct vectors, this is a proper colouring, and the surface $\mathcal{M}_{\lambda_g ''}$ is orientable by Proposition \ref{orientability}. Moreover, $\chi(\mathcal{M}_{\lambda_g ''})=8 \cdot \frac{2-2g}{4}=4-4g$, that is, $\mathcal{M}_{\lambda_g ''}$ is a surface of genus $2g-1$.

\begin{figure}[ht]
\centering
\begin{subfigure}[h]{0.4\textwidth}
\includegraphics[width=\textwidth]{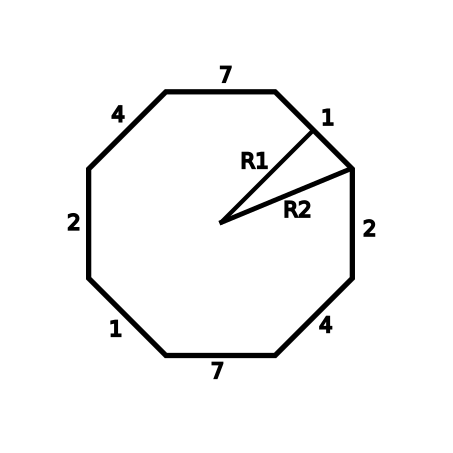}
\end{subfigure}
\begin{subfigure}[h]{0.4\textwidth}
\includegraphics[width=\textwidth]{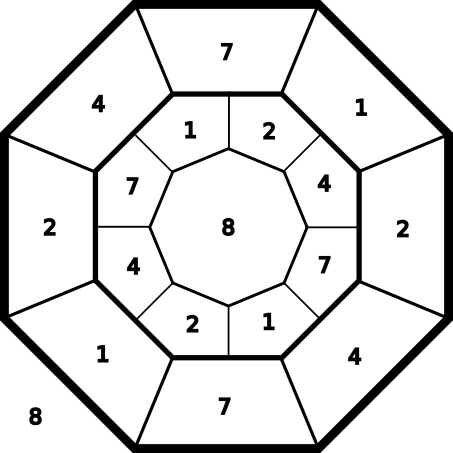}
\end{subfigure}
\caption{Respective colouring of the $(2g+2)$-gon and of the Lobell polyhedron $R(2g+2)$, as described in Section \ref{Kulkarni}, where colours are represented in binary notation.}
\label{Kulkarni figures}
\end{figure}

Let $r_1,r_2$ be the reflections shown in Figure \ref{Kulkarni figures}--left, which generate the group of symmetries $\mathrm{Sym}(P_g)\cong D_{2g+2}$. Due to the fact that $2g+2 \equiv 0 \,(\text{mod} \,8)$, we have that both these symmetries are admissible, with induced actions on $\mathbb{Z}_2 ^3$ given by 
\begin{equation*}
\Phi_{\lambda_g ''} (r_1)= \begin{pmatrix}
1 & 1 & 0 \\
0 & 1 & 0 \\
0 & 1 & 1 \\
\end{pmatrix}, \,\, 
\Phi_{\lambda_g ''} (r_2)= \begin{pmatrix}
0 & 1 & 1 \\
1 & 0 & 1 \\
0 & 0 & 1 \\
\end{pmatrix}, %
\end{equation*}
respectively. It follows that $\mathrm{Adm}_{\lambda_g ''}(P_g)= \mathrm{Sym}(P_g) \cong D_{2g+2}$, and by Proposition \ref{coloured isometry group}, we have that $ \text{Isom}_c (\mathcal{M}_{\lambda_g ''}) \cong D_{2g+2} \ltimes \mathbb{Z}_2 ^3$, with the product given by the action $\Phi_{\lambda_g ''}$.

Let $d_g =\big( (r_1 r_2)^{g+1},e_1+e_3\big) \in \text{Isom}_c ^+ (\mathcal{M}_{\lambda_g ''})$. We have the following:

\begin{theorem}\label{is Kulkarni}
The surface $\mathcal{M}_{\lambda_g ''}/_{\langle d_g \rangle}$ is isometric to the Kulkarni surface $Y_g$ for every $g \equiv 3 \,(\text{mod} \,4)$.
\end{theorem}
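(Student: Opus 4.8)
The plan is to mirror the strategy of Theorem \ref{is Wiman}: first show that $d_g$ is a central, fixed point-free involution of $\mathcal{M}_{\lambda_g''}$, so that the quotient is a genuine orientable surface onto which the whole coloured isometry group descends; then pin down the descended group as $K_g$ by exhibiting explicit generators; and finally invoke the signature characterization of the Kulkarni surface.

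First I would verify that $d_g=\big((r_1r_2)^{g+1},e_1+e_3\big)$ is a fixed point-free involution. The rotation $(r_1r_2)^{g+1}$ is the half-turn of $P_g$ (angle $\pi$), whose only fixed point is the interior centre, so $\mathrm{Fix}\big((r_1r_2)^{g+1}\big)\cap\partial P_g=\emptyset$. The crucial point is that $\Phi_{\lambda_g''}(r_1r_2)$ has order $4$ in $GL(\mathbb{Z}_2^3)$, and since $g\equiv 3\ (\text{mod}\ 4)$ we have $4\mid g+1$, so $\Phi_{\lambda_g''}\big((r_1r_2)^{g+1}\big)=\mathrm{Id}$; this is exactly where the hypothesis on $g$ enters. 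With the action trivial, Corollary \ref{fixed point-free isometries} applies and gives that $d_g$ is fixed point-free, while a direct computation gives $d_g^2=1$. Because $(r_1r_2)^{g+1}$ is central in $D_{2g+2}$ and $e_1+e_3$ is fixed by both $\Phi_{\lambda_g''}(r_1)$ and $\Phi_{\lambda_g''}(r_2)$ (hence by all of $\mathrm{Adm}_{\lambda_g''}(P_g)$), the involution $d_g$ is in fact central in $\mathrm{Isom}_c(\mathcal{M}_{\lambda_g''})$. As $d_g$ is orientation-preserving, the quotient $\mathcal{M}_{\lambda_g''}/_{\langle d_g\rangle}$ is by Lemma \ref{quotient lemma} an orientable surface of Euler characteristic $\tfrac{1}{2}(4-4g)=2-2g$, i.e. of genus $g$.

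Next, since $d_g$ is central, $N_{\mathrm{Isom}_c^+}(\langle d_g\rangle)$ is all of $\mathrm{Isom}_c^+(\mathcal{M}_{\lambda_g''})$, so by Lemma \ref{quotient lemma} the group $\mathrm{Isom}_c^+(\mathcal{M}_{\lambda_g''})/_{\langle d_g\rangle}$, of order $\tfrac{1}{2}(16g+16)=8g+8$, injects into $\mathrm{Aut}\big(\mathcal{M}_{\lambda_g''}/_{\langle d_g\rangle}\big)$. I would then identify this quotient with $K_g$ by taking $a=\overline{(r_1r_2,0)}$ and $b=\overline{(r_2,e_2)}$ and checking the relations of Equation \ref{K group}. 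Here $a$ has order $2g+2$ (note $a^{g+1}=\overline{(\mathrm{id},e_1+e_3)}\neq 1$), $b$ has order $4$ with $b^2=\overline{(\mathrm{id},e_1+e_2)}$, and a short $\mathbb{Z}_2$-linear computation using the given matrices $\Phi_{\lambda_g''}(r_1)$, $\Phi_{\lambda_g''}(r_2)$ gives $(ab)^2=1$ and $b^2ab^2=\overline{(r_1r_2,e_1+e_3)}=a^{g+2}$. The twisted relation $b^2ab^2=a^{g+2}$, which is precisely what distinguishes $K_g$ from $AM_g$, is forced by the nontrivial action of the reflections on $\mathbb{Z}_2^3$, and this is the step I expect to be the main obstacle, both in getting the generators right and in checking that $a,b$ generate the full group of order $8g+8$ (via von Dyck this gives a surjection $K_g\twoheadrightarrow\langle a,b\rangle$, and an order count then yields the isomorphism $\mathrm{Isom}_c^+(\mathcal{M}_{\lambda_g''})/_{\langle d_g\rangle}\cong K_g$).

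Finally I would compute the quotient orbifold. Because every symmetry of $P_g$ is admissible, so that $\mathrm{Adm}_{\lambda_g''}(P_g)\cong D_{2g+2}$ exactly as for $\lambda_g$, the orbifold $\mathcal{M}_{\lambda_g''}/_{\mathrm{Isom}_c^+(\mathcal{M}_{\lambda_g''})}$ is obtained, just as in the proof of Theorem \ref{is AM}, by gluing two triangles of the first barycentric subdivision of $P_g$ and has signature $(0;2,4,2g+2)$. Since $\langle d_g\rangle$ is normal and this orbifold equals $\big(\mathcal{M}_{\lambda_g''}/_{\langle d_g\rangle}\big)/_{K_g}$, the group $K_g$ acts on $\mathcal{M}_{\lambda_g''}/_{\langle d_g\rangle}$ with signature $(0;2,4,2g+2)$. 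By Remark \ref{conformal automorphisms} this exhibits $K_g$ inside $\mathrm{Aut}\big(\mathcal{M}_{\lambda_g''}/_{\langle d_g\rangle}\big)$ acting with the required signature, and the uniqueness of the Kulkarni surface among surfaces carrying a group isomorphic to $K_g$ with signature $(0;2,4,2g+2)$, valid for every $g\equiv 3\ (\text{mod}\ 4)$ including $g=3$, lets me conclude that $\mathcal{M}_{\lambda_g''}/_{\langle d_g\rangle}\cong Y_g$.
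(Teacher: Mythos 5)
Your proposal is correct and follows essentially the same route as the paper's proof: the same fixed-point-free and centrality checks for $d_g$ via Corollary \ref{fixed point-free isometries} and the invariance of $e_1+e_3$, the same generators $a=(r_1r_2,0)$ and $b=(r_2,e_2)$ verifying the $K_g$ relations (with $b^2ab^2=a^{g+2}$ holding modulo $d_g$), the same signature computation $(0;2,4,2g+2)$ for the quotient orbifold, and the same appeal to the uniqueness of the Kulkarni surface. The extra details you supply (the order of $a$ in the quotient, the genus count, the explicit von Dyck argument) are consistent with, and slightly more careful than, the paper's exposition.
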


\begin{proof}
Put $x=(r_1,e_1)$, $y=(r_1,e_2)$, $z=(r_2,e_1)$ and $w=(r_2,e_3)$. Since $xy^2=(r_1,e_3)$, $z^3=(r_2,e_2)$, $y=z^2x^{-1}$ and $w=y^2 z$, we have by Corollary \ref{orientation-preserving coloured isometries} that $x,z$ generate the orientation-preserving coloured isometry group.

Now, since $g + 1\equiv 0 \,(\text{mod} \, 4)$, we have that $\Phi_{\lambda_g ''} \big((r_1 r_2)^{g+1}\big)$ acts trivially in $\mathbb{Z}_2 ^3$, and it follows from Corollary \ref{fixed point-free isometries} that $d_g$ is fixed point-free. Also, $d_g^2=1$, and since $e_1+e_3$ is fixed by both actions $\Phi_{\lambda_g ''}(r_1)$, $\Phi_{\lambda_g ''}(r_2)$, it follows that $d_g$ commutes with $x$ and $z$ and thus with every element of $\text{Isom}_c ^+ (\mathcal{M}_{\lambda_g ''})$. Equivalently, $N(d_g)=\text{Isom}_c ^+ (\mathcal{M}_{\lambda_g ''})$, where the normalizer is taken inside $\text{Isom}_c ^+ (\mathcal{M}_{\lambda_g ''})$.

Put $a=xz=(r_1 r_2,0)$, $b=z^3=z^{-1}=(r_2,e_2)$. We have that $ a^{2g+2}=b^4=1$, $(ab)^2=x^2=1$ and $b^2 a b^2 = a^{g+2} d_g$. It follows that $N(d_g)/_{\langle d_g\rangle}$ is a group of order $8g+8$ with the same presentation as the Kulkarni group $K_g$, that is, $N(d_g)/_{\langle d_g\rangle}\cong K_g$. Also, $$\big(\mathcal{M}_{\lambda_g ''}/_{\langle d_g \rangle}\big)/_{\big(N(d_g)/_{\langle d_g \rangle}\big)} \cong \mathcal{M}_{\lambda_g ''}/_{\text{Isom}_c ^+ (\mathcal{M}_{\lambda_g ''})},$$ which is an orbifold obtained similiarly to the one in the proof of Theorem \ref{is AM}, and thus also has signature $(0;2,4,2g+2)$. Moreover, by Remark \ref{conformal automorphisms} and Lemma \ref{quotient lemma}, $$\mathrm{Aut}\big(\mathcal{M}_{\lambda_g ''}/_{\langle d_g \rangle}\big) > N_{\mathrm{Aut}(\mathcal{M}_{\lambda_g ''})}(d_g)/_{\langle d_g \rangle} > N_{\text{Isom}_c ^+(\mathcal{M}_{\lambda_g ''})}(d_g)/_{\langle d_g\rangle} = N(d_g)/_{\langle d_g \rangle} \cong K_g,$$ where the second inclusion comes from the fact that $\mathrm{Isom}_c ^+ (\mathcal{M}_{\lambda_g ''}) < \mathrm{Aut} (\mathcal{M}_{\lambda_g ''}) $. We can thus conclude that $\mathcal{M}_{\lambda_g ''}/_{\langle d_g\rangle} \cong Y_g$, the Kulkarni surface of genus $g$.
\end{proof}

\begin{rem}\label{common double-cover}
We note that, if we would take the isometry $e=(\mathrm{id},e_1+e_3)$, we would have that $\mathcal{M}_{\lambda_g ''}/_{\langle e \rangle} \cong \mathcal{M}_{\lambda_g} \cong X_g$. Indeed, the isotropy groups $G_x$ for the vertices $x\in P_g '$ with colouring $\lambda_g ''$ are one of the following four subspaces of $\mathbb{Z}_2 ^3$: $$[e_1,e_2], [e_2,e_3], [e_3,e_1+e_2+e_3]=[e_3,e_1+e_2], [e_1+e_2+e_3,e_1]=[e_1,e_2+e_3].$$ Since $(e_1+e_3)$ does not belong to any of these subspaces, it follows from Corollary \ref{fixed point-free isometries} that $e\in \mathrm{Isom}_c ^+(\mathcal{M}_{\lambda_g ''})$ is fixed point-free. Moreover, $\mathcal{M}_{\lambda_g ''}/_{\langle e \rangle}\cong \mathcal{M}_{(p \, \circ \lambda_g '')}$, where $p:\mathbb{Z}_2 ^3 \to \mathbb{Z}_2 ^2$ is the projection that maps $e_1,e_3\mapsto e_1$ and $e_2\mapsto e_2$.
We conclude with the fact that $p\circ \lambda_g ''=\lambda_g$.
\end{rem}

\begin{corollary}\label{Kulkarni embeds}
The Kulkarni surface $Y_g$ embeds geodesically for every $g \equiv 3 \,(\text{mod} \,4)$.
\end{corollary}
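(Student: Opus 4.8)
The plan is to mirror the argument used for the Wiman surfaces in Corollary \ref{Wiman embeds}, promoting the quotient description $Y_g \cong \mathcal{M}_{\lambda_g''}/_{\langle d_g\rangle}$ of Theorem \ref{is Kulkarni} to a geodesic embedding inside a suitable quotient of a L\"obell manifold. First I would take the right-angled L\"obell polyhedron $R(2g+2)$ equipped with the colouring $\Lambda_g''$ depicted in Figure \ref{Kulkarni figures}--right, chosen so that the colours of the pentagons adjacent to one distinguished $(2g+2)$-gonal face $F_0$ run cyclically through $e_1, e_2, e_3, e_1+e_2+e_3$. By construction the colouring induced on $F_0$ is exactly $\lambda_g''$, so Proposition \ref{induced colouring} gives a geodesic embedding of $\mathcal{M}_{\lambda_g''}$ into the closed hyperbolic $3$-manifold $\mathcal{M}_{\Lambda_g''}$, which is orientable by Proposition \ref{orientability}. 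Since $\lambda_g''$ has full rank $3$, one should check that $\Lambda_g''$ has rank $4$ so that $\pi^{-1}(F_0)$ has $2^{4-3-1}=1$ connected component, consistent with the ambient manifold being tessellated by $16$ copies of $R(2g+2)$.

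Next I would produce a fixed point-free involution $D \in \mathrm{Isom}_c^+(\mathcal{M}_{\Lambda_g''})$ whose restriction to the embedded copy of $\mathcal{M}_{\lambda_g''}$ is precisely $d_g = \big((r_1 r_2)^{g+1}, e_1+e_3\big)$. The natural candidate is $D=(\rho, w)$, where $\rho$ is the half-turn of $R(2g+2)$ about the axis through the centres of the two $(2g+2)$-gonal faces; this $\rho$ restricts to the rotation by $\pi = (r_1r_2)^{g+1}$ on $F_0$. I would verify that $\rho$ is admissible for $\Lambda_g''$, compute $\Phi_{\Lambda_g''}(\rho)$, and then select $w$ in the ambient colour space so that: (i) $w$ restricts under the induced colouring to $e_1+e_3$ on $F_0$, forcing $D$ to agree with $d_g$ on the embedded surface; (ii) $w\in\mathrm{Fix}\big(\Phi_{\Lambda_g''}(\rho)\big)$, so that $D^2=\mathrm{Id}$; and (iii) $w$ avoids the isotropy subgroups $G_x$ at the fixed points of $\rho|_{\partial R(2g+2)}$, so that $D$ is fixed point-free by Corollary \ref{fixed point-free isometries}. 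Since the only fixed points of the half-turn on $\partial R(2g+2)$ are the two polygon centres, each an interior point of a $2$-face, condition (iii) reduces to keeping $w$ out of the two rank-$1$ subspaces spanned by the colours of the $(2g+2)$-gonal faces.

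With such a $D$ in hand, passing to the quotient yields the statement: by Lemma \ref{quotient lemma} the space $\mathcal{M}_{\Lambda_g''}/_{\langle D\rangle}$ is a compact orientable hyperbolic $3$-manifold, and $D$ carries the embedded $\mathcal{M}_{\lambda_g''}$ to itself acting there as $d_g$, so $Y_g \cong \mathcal{M}_{\lambda_g''}/_{\langle d_g\rangle}$ embeds geodesically in $\mathcal{M}_{\Lambda_g''}/_{\langle D\rangle}$ for every $g\equiv 3\,(\mathrm{mod}\,4)$.

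I expect the main obstacle to be the bookkeeping in the second paragraph: exhibiting an explicit rank-$4$ colouring $\Lambda_g''$ of $R(2g+2)$ that induces $\lambda_g''$ on $F_0$, and then pinning down a single vector $w$ that simultaneously restricts to $e_1+e_3$, lies in $\mathrm{Fix}\big(\Phi_{\Lambda_g''}(\rho)\big)$, and avoids the two isotropy subspaces. Unlike the Wiman case, where the ambient colouring was the standard rank-$3$ L\"obell colouring of Example \ref{Lobell example}, here the required colouring is non-standard and of higher rank, so the admissibility of $\rho$ and the verification that $D$ restricts to $d_g$ (rather than to some other lift) must be checked with care.
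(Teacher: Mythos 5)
Your proposal is correct and follows essentially the same route as the paper: the same rank--$4$ colouring $\Lambda_g''$ of $R(2g+2)$ inducing $\lambda_g''$ on a $(2g+2)$-gonal face, the same half-turn $\rho$ about the axis through the polygon centres, and the same choice of vector (the paper takes $w=e_1+e_3$ outright, giving $S'=(\rho,e_1+e_3)$, which satisfies all three of your conditions since $\rho$ fixes every colour and the two polygon centres have isotropy $\{0,e_4\}$). The quotient $\mathcal{M}_{\Lambda_g''}/_{\langle S'\rangle}$ is exactly the ambient manifold used in the paper's proof.
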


\begin{proof}
Take the rank--$4$ colouring $\Lambda_g ''$ of the L\"obell polyhedron $R(2g+2)$, as given in Figure \ref{Kulkarni figures}--right, which assigns $e_4$ to the $(2g+2)$-gonal faces; $e_1$, $e_2$, $e_3$, $(e_1+e_2+e_3)$ cyclically in the pentagonal faces adjacent to one $(2g+2)$-gonal face; and the same cyclical assignment rotated by two pentagonal faces in the remaining pentagonal faces. This assignment is always possible for $2g+2 \equiv 0 \,(\text{mod} \,8)$, and it gives a proper colouring since every three faces meeting at a vertex have linearly independent colours. It follows from Propositions \ref{DJ}, \ref{orientability} that $\mathcal{M}_{\Lambda_g ''}$ is a closed, orientable, hyperbolic $3$-manifold.

Now, as in the proof of Corollary \ref{Wiman embeds}, take the rotation $r'$ of angle $\pi$ around the axis that crosses the center of the $(2g+2)$-gonal. This rotation fixes each colour, and thus is an admissible symmetry for $\Lambda_g''$ which acts trivially in $\mathbb{Z}_2 ^4$. Moreover, it restricts to the symmetry $(r_1 r_2)^{g+1}$ in each $(2g+2)$-gonal face.

Take, also, the isometry $S'=(r',e_1+e_3)\in \mathrm{Isom}_c ^+(\mathcal{M}_{\Lambda_g ''})$. Similarly to the proof of Corollary \ref{Wiman embeds}, we can show that $S'$ is fixed point-free, orientation-preserving isometry of $\mathcal{M}_{\Lambda_g ''}$. It's also not hard to see that the induced colouring in each $P_g$ face is $\lambda_g ''$. Moreover, $S'$ acts on each $(2g+2)$-gonal face precisely as $d_g$ acts on $P_g$. Since $\mathcal{M}_{\lambda_g ''} $ embeds geodesically in $\mathcal{M}_{\Lambda_g ''} $ by Proposition \ref{induced colouring}, it follows from Theorem \ref{is Kulkarni} that $Y_g\cong \mathcal{M}_{\lambda_g ''}/_{\langle d_g \rangle}$ embeds geodesically in the compact, orientable, hyperbolic $3$-manifold $\mathcal{M}_{\Lambda_g''}/_{\langle S' \rangle}$.
\end{proof}

\begin{corollary}
The Kulkarni surface $Y_g$ bounds geometrically an orientable, closed manifold for every $g \equiv 3 \,(\text{mod} \,8)$. 
\end{corollary}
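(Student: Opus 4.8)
The plan is to reduce the statement to the production of a suitable involution and then invoke Lemma \ref{lemma-embed-bound}. By Corollary \ref{Kulkarni embeds} the surface $Y_g \cong \mathcal{M}_{\lambda_g''}/_{\langle d_g\rangle}$ already embeds geodesically for every $g\equiv 3\,(\mathrm{mod}\,4)$, so it suffices to exhibit a fixed point-free, orientation-reversing involution of $Y_g$; the orientation-reversing condition is exactly what guarantees, via the proof of Lemma \ref{lemma-embed-bound}, that the resulting bounding manifold is orientable.

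First I would record that $d_g = \big((r_1r_2)^{g+1}, e_1+e_3\big)$ is central in the \emph{full} coloured isometry group $\mathrm{Isom}_c(\mathcal{M}_{\lambda_g''})$, not merely in its orientation-preserving part: since $e_1+e_3$ is fixed by both $\Phi_{\lambda_g''}(r_1)$ and $\Phi_{\lambda_g''}(r_2)$, and $(r_1r_2)^{g+1}$ (the half-turn) is central in $D_{2g+2}$, every $(s,v)$ commutes with $d_g$. Consequently $N(\langle d_g\rangle) = \mathrm{Isom}_c(\mathcal{M}_{\lambda_g''})$ and, by Lemma \ref{quotient lemma}, the whole coloured isometry group descends to $Y_g$. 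I would then search, exactly as in Corollary \ref{Wiman involutions}, for an element $\varphi=(s,v)$ whose descent $\tilde\varphi$ is orientation-reversing, squares to the identity (i.e.\ $\varphi^2 \in \langle d_g\rangle$), and generates together with $d_g$ a group acting freely on $\mathcal{M}_{\lambda_g''}$.

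The explicit candidate is $\varphi = \big((r_1r_2)^{(g+1)/2}, e_1\big)$, a rotation of $P_g$ by $\pi/2$ twisted by the translation $e_1$. The key computation is that the induced matrix $M:=\Phi_{\lambda_g''}(r_1r_2)$ has order $4$ in $GL_3(\mathbb{Z}_2)$; when $g\equiv 3\,(\mathrm{mod}\,8)$ one has $(g+1)/2\equiv 2\,(\mathrm{mod}\,4)$, hence $\Phi_{\lambda_g''}(s)=M^2$, and a direct check gives $(M^2+I)e_1 = e_1+e_3$. Therefore $\varphi^2 = \big((r_1r_2)^{g+1}, (M^2+I)e_1\big) = d_g$, so $\langle\varphi\rangle$ is cyclic of order $4$ and $\tilde\varphi$ is an involution. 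This is precisely where the hypothesis $g\equiv 3\,(\mathrm{mod}\,8)$ (rather than merely $g\equiv3\,(\mathrm{mod}\,4)$) enters: for $g\equiv 7\,(\mathrm{mod}\,8)$ one instead gets $\Phi_{\lambda_g''}(s)=I$, the translation part of $\varphi^2$ collapses to $0$, and this construction no longer produces $d_g$. The orientation is immediate, since $s$ is an orientation-preserving rotation while $e_1$ has odd weight, so $\varphi$, and hence $\tilde\varphi$, is orientation-reversing.

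The main obstacle is the fixed point-free verification, because $\Phi_{\lambda_g''}(s)=M^2$ acts nontrivially on $\mathbb{Z}_2^3$ and so Corollary \ref{fixed point-free isometries} does not apply directly; indeed $\varphi^2=d_g$ forces $M^2+I\neq 0$, so a nontrivial action is unavoidable here. I would handle this by analysing fixed points directly: the rotation $s$ by $\pi/2$ fixes only the center of $P_g$, an interior point with trivial isotropy $G=\{0\}$, so a fixed point of $\varphi$ would force $(M^2+I)e_1=0$, which fails. The same argument applied to $\varphi^{-1}=\varphi^3$ shows it too is fixed point-free, while $d_g=\varphi^2$ is fixed point-free by the proof of Theorem \ref{is Kulkarni}; hence $\langle\varphi\rangle$ acts freely and $\tilde\varphi$ is a fixed point-free, orientation-reversing involution of $Y_g$. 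Lemma \ref{lemma-embed-bound} together with Corollary \ref{Kulkarni embeds} then yields the orientable geometric bounding.
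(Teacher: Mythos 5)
Your proposal is correct and follows essentially the same route as the paper: both reduce the statement, via Lemma \ref{lemma-embed-bound} and Corollary \ref{Kulkarni embeds}, to producing $\varphi=(s,v)$ with $\varphi^2=d_g$ whose descent to $Y_g$ is a fixed point-free, orientation-reversing involution, and both locate the failure for $g\equiv 7\pmod 8$ in the vanishing of $\Phi_{\lambda_g''}\big((r_1r_2)^{(g+1)/2}\big)+I$. One small imprecision: the no-fixed-point condition at the centre of $P_g$ is that $e_1\notin \mathrm{im}(M^2+I)=\langle e_1+e_3\rangle$ (i.e.\ $(M^2+I)v=e_1$ has no solution $v$), not that $(M^2+I)e_1\neq 0$; since the image is indeed $\langle e_1+e_3\rangle$, your conclusion is unaffected.
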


\begin{proof}
Similarly to the proof of Corollary \ref{Wiman involutions}, we have that $d_g$ commutes with every element of $\mathrm{Isom}_c(\mathcal{M}_{\lambda_g ''})$ and $\mathrm{Isom}_c(\mathcal{M}_{\lambda_g ''})/_{\langle d_g \rangle} < \mathrm{Isom}\big(\mathcal{M}_{\lambda_g ''}/_{\langle d_g \rangle}\big)$, where the inclusion is surjective for $g\neq 3$ by the non-extendability of the action of the Kulkarni group $K_g$ on $Y_g$, which implies that both groups must have the same order.

As in the proof of Corollary \ref{Wiman involutions}, we have that an isometry $\varphi=(s,v)\in \mathrm{Isom}_c(\mathcal{M}_{\lambda_g ''})$ will descend to a fixed point-free, orientation-reversing involution $\tilde{\varphi}\in \mathrm{Isom}\big(\mathcal{M}_{\lambda_g ''}/_{\langle d_g\rangle}\big)$ if, and only if, $\varphi$ is orientation-reversing, $\varphi^2 \in \langle d_g \rangle$ and $\langle \varphi, d_g\rangle$ acts freely on $\mathcal{M}_{\lambda_g ''}$. As in the proof of of Corollary \ref{Wiman involutions}, we can show that there is no such $\varphi$ in the case $\varphi^2=1$.

Take then $\varphi^2=d_g$. As in the proof of Corollary \ref{Wiman involutions}, we must have then that $s=(r_1 r_2)^{\pm\frac{g+1}{2}}$ and $v+\Phi_{\lambda_g ''}(s)(v)=e_1+e_3$, from which it follows that $v\notin \mathrm{Fix}\big(\Phi_{\lambda_g ''}(s)\big)$. If $g \equiv 7 \,(\text{mod} \,8)$, however, we have that $\Phi_{\lambda_g ''}(s)=\mathrm{Id}$ and thus there is no such $\varphi$ with $\varphi^2=d_g$. If $g \equiv 3 \,(\text{mod} \,8)$, on the other hand, $\Phi_{\lambda_g ''}(s)=\Phi_{\lambda_g ''}(r_1)$ and both $\varphi$, $\varphi d_g=\varphi^3=\varphi^{-1}$ are fixed point-free for any choice of $v\in \{e_1,e_2,e_3,e_1+e_2+e_3\}$, and orientation-reversing as well.

We thus have that the Kulkarni surface $Y_g\cong \mathcal{M}_{\lambda_g ''}/_{\langle d_g\rangle}$ admits an orientation-reversing, fixed point-free involution if, and only if, $g \equiv 3 \,(\text{mod} \,8)$.\footnote{This result was also obtained by \cite[Section 2, p. 640]{BBCGG}.} We can then conclude with Lemma \ref{lemma-embed-bound} and Corollary \ref{Kulkarni embeds}.
\end{proof}

\end{document}